\definecolor{darkgreen}{rgb}{0,0.4,0}
\numberwithin{equation}{section}
\numberwithin{figure}{section}
\numberwithin{table}{section}
\newcommand{\MM}[1]{\bgroup\color{olive}MM:~#1\egroup}
\newcommand{\CF}[1]{\bgroup\color{cyan}CF:~#1\egroup}
\theoremstyle{plain}
\newtheorem{theorem}{Theorem}[section]
\newtheorem{lemma}[theorem]{Lemma}
\newtheorem{corollary}[theorem]{Corollary}
\theoremstyle{definition}
\newtheorem{definition}[theorem]{Definition}
\newtheorem{remark}[theorem]{Remark}
\renewcommand{\epsilon}{\varepsilon}
\newcommand{\norm}[1]{\lVert #1 \rVert}
\newcommand*{\quark}{\setbox0\hbox{$x$}\hbox to\wd0{\hss$\cdot$\hss}}
\newcommand*{\R}{\mathbb{R}}
\newcommand*{\Rnn}{\mathbb{R}_{\geq 0}}
\newcommand*{\Rp}{\mathbb{R}_{>0}}
\newcommand*{\N}{\mathbb{N}}
\newcommand*{\E}{\mathbb{E}}
\renewcommand*{\P}{\mathbb{P}}
\newcommand{\sigalg}{\mathcal{F}}
\newcommand{\X}{\mathcal{X}}
\newcommand{\cX}{\mathcal{X}}
\newcommand{\cY}{\mathcal{Y}}
\newcommand{\bz}{\mathbf{z}}
\newcommand{\bZ}{\mathbf{Z}}
\newcommand{\dd}{\mathrm{d}}
\DeclareMathOperator*{\esssup}{ess\,sup}
\title{
Fuk--Nagaev inequality in smooth Banach spaces:\\
Optimum bounds for distributions of heavy-tailed
martingales
}
\author[1]{Mattes Mollenhauer\thanks{mattes.mollenhauer@merantix-momentum.com}}
\author[2]{Christian Fiedler\thanks{christian.fiedler@tum.de}}
\affil[1]{Merantix Momentum}
\affil[2]{Technical University of Munich and Munich Center for Machine Learning (MCML)}
\date{\today}
\begin{document}

\maketitle

\begin{abstract}
    We derive a Fuk--Nagaev inequality
    for the maxima of norms of martingale sequences
    in smooth Banach spaces which allow
    for a finite number of higher conditional moments.
    The bound is obtained by combining an optimization approach 
    for a Chernoff bound
    due to \citet{rio2017constants} with a classical bound for
    moment generating functions of smooth Banach space norms by \citet{Pinelis94}.
    Our result improves comparable infinite-dimensional bounds in the literature by removing unnecessary 
    centering terms and giving precise constants.
    As an application, we propose a McDiarmid-type bound
    for vector-valued functions which allow for
    a uniform bound on their conditional higher moments.
    \\
    \textbf{Keywords.} 
    Fuk--Nagaev inequality $\bullet$ 
    heavy-tailed concentration $\bullet$
    martingale inequality
    \\
    \textbf{2020 Mathematics Subject Classification.}
    60G42 $\bullet$ 
    60E15 
\end{abstract}

\section{Introduction}

The family of \emph{Fuk--Nagaev inequalities}
\citep{FukNagaev1971, Fuk1973, Nagaev1979} provides tail bounds
for independent sums and martingales under finite higher moment assumptions, thereby generalizing classical 
results for bounded, subgaussian, and subexponential 
random variables by Hoeffing, Azuma, Bennett, and Bernstein
(see e.g.\ \citealp{boucheron2013})
to distributions with heavier tails.

In particular, for centered independent real-valued random variables
$X_1, \dots, X_n$ satisfying the moment conditions
$\sum_{i=1}^n \E|X_i|^2 \leq \sigma^2$ and
$\sum_{i=1}^n \E|X_i|^q \leq C_q^q$ for some $q > 2$,
the sum $S_n = X_1 + \dots + X_n$ allows for the tail bound
\begin{equation*}
    \P \left[ \left| S_n \right|
    > t
    \right]
    \leq
    \alpha_q
    \frac{ C_q^q}{t^q}
    +
    \beta_q
    \exp \left(- \gamma_q
    \frac{t^2}{\sigma^2}
    \right)
\end{equation*} for all $t >0$ and constants
$\alpha_q, \beta_q, \gamma_q > q$ depending exclusively on $q$. 
This result shows that 
in a large deviation regime, the tails of independent sums are
dominated by a polynomial term with a decay rate
of their highest existing absolute moment.
In a small deviation regime however, they are
dominated by a subgaussian term, essentially 
reflecting the central limit theorem.

Over the last years, different variations of bounds of the above type have been derived. 
For example, \cite{rio2017constants} and \cite{fan2017} derive bounds for maxima
of real-valued martingales by giving explicit expressions for
$\alpha_q, \beta_q, \gamma_q$ under 
corresponding conditional moment assumptions,
\cite{rio2017dependent} proves a bound 
for dependent sums under mixing assumptions,
and \citet{jirak2025} give a Fuk--Nagaev bound for 
independent sums of matrices.
For independent sums of random variables in normed spaces, \citet{Yurinsky1995}
proves a Fuk--Nagaev bound for the centered term
$\norm{S_n} - \E[\norm{S_n}]$ 
under higher moment assumptions for $q \in \N$, $q \geq 3$,
without providing explicit constants, but gives an inconsistent
proof as noted by \citet{mollenhauer2025regularized}.
\citet{marchina2021} derives a bound
for centered empirical processes and provides explicit constants.
\citet{einmahl} prove a result in Banach spaces
for $\max_{1 \leq k \leq n} \norm{S_k} - (1+\eta) \E[\norm{S_n}]$
with $\eta > 0$ and undetermined constants.

It is known that for Banach spaces with \emph{smooth norms}
\citep{pisier1971}
sharp analogues of classical results such as
the Hoeffding and Bernstein inequalities
can be given and the centering term $\E[\norm{S_n}]$ 
can in fact be removed
\citep{Pinelis1986Remarks,Yurinsky1995,Pinelis92,Pinelis94}.
Based on this observation,
\cite{mollenhauer2025regularized} recently proved a Hilbert space
version of the Fuk--Nagaev 
bound by \cite{Yurinsky1995} in which the centering
term is removed, but the constants remain undetermined
and the bound is only valid for $q \in \N$, $q \geq 3$.

In these notes, we derive a Fuk--Nagaev 
bound for \emph{uncentered} norms of 
martingales in smooth Banach spaces that includes explicit
constants, reconciling results
for real-valued martingales with the theory of infinite-dimensional tail bounds.
Up to a dependence on the 
modulus of smoothness of the Banach space,
we recover the optimal constants in the exponential term
derived by \citet{rio2017constants} from the real-valued
case. The general structure of the 
dependence on the smoothness
appears to be sharp \citep{Pinelis94}.
Up to smoothness, we 
also recover the constant of the polynomial
term derived by \cite{rio2017constants} and
note that the polynomial term itself is known to be
asymptotically
sharp \citep[e.g.][]{mollenhauer2025regularized}.
As an application, we derive a McDiarmid-type inequality
for Banach-space valued functions of independent
random variables with higher conditional moments
based on the classical martingale technique. This inequality
extends unbounded versions of the McDiarmid inequality 
with subgaussian and subexponential conditional distributions 
\citep{kontorovich2014,maurerpontil2021}
to heavy-tailed distributions. We highlight that the assumptions
about the conditional higher moments are easily verified for
Hölder continuous functions.

\section{Main Result}

All random variables considered in these notes will be defined on a
common probability space $(\Omega, \sigalg, \P)$.
We will not discuss the intricacies of measurability and integrability
of Banach-spaced random variables here and refer the reader to 
\citet{ledoux1991} for more details. We simply assume that
all random variables in question are 
almost surely separably valued (or that the Banach space itself is separable) in order to avoid measurability issues.

\begin{definition}[$(2,D)$-smoothness]
    Let $D \geq 1$.
    A Banach space $\cX$ equipped with the norm $\norm{\cdot}$ is called
    \emph{$(2,D)$-smooth}, if
    \begin{equation*}
        \norm{x + y}^2 
        +
        \norm{x - y}^2 
        \leq
        2 \norm{x}^2
        +
        2 D \norm{y}^2
        \quad 
        \text{for all } 
        x, y \in \cX.
    \end{equation*}

We refer the reader to \citet{pisier1971,pisier2016} for a general analysis
of smoothness in the context of Banach-space valued martingales
and to \cite{neerven2020} for examples.
If $\cX$ is a Hilbert space, then it is $(2,1)$-smooth.
If  $\cY$ is a $(2,D)$-smooth Banach space, 
then the Bochner--Lebesgue space 
$L^p(\Omega, \sigalg, \mu; \cY)$ is $(2,D\sqrt{p-1})$-smooth
for all $p \geq 2$ and any measure space $(\Omega, \sigalg, \mu)$.
In particular, the Lebesgue space 
$L^p(\Omega, \sigalg, \mu; \R)$ is $(2,\sqrt{p-1})$-smooth, see also \cite{Pinelis94}.

\end{definition}

\filbreak
\begin{theorem}[Fuk--Nagaev inequality, confidence bound version]
\label{thm:main}

Let $(\X, \norm{\cdot})$ be a $(2, D)$-smooth Banach space.
Let $(M_i)_{0 \leq i \leq n}$ be a martingale in $\X$ adapted to
a nondecreasing filtration $(\sigalg_i)_{0 \leq i \leq n}$
with $M_0 = 0$.
Define the conditional expectation
operators $\E_i[\, \cdot \, ] := \E[ \, \cdot \, \vert \sigalg_i ]$ for $i=0,\ldots,n$
and martingale differences $\xi_i := M_i - M_{i-1}$ for $i=1,\ldots,n$.
Assume
\begin{equation}
    \label{eq:mom}
    \sigma^2 :=
    \esssup \sum_{i=1}^n \E_{i-1} [ \norm{\xi_i}^2 ]
    < \infty
    \quad
    \text{ and }
    \quad
    C^q_q := 
    \esssup \sum_{i=1}^n \E_{i-1} [ \norm{\xi_i}^q ] < \infty
\end{equation}
for some $q > 2$.
Then for all $u \in (0,1)$ we have
\begin{equation} \label{eq:main_bound}
    \P \left[
        \max_{i \in \{1, \dots n \}} \norm{M_n}
        \leq D \sigma \sqrt{2\log(2/u)} + c_{q,D}C_q \left(\frac{2}{u}\right)^{1/q}
    \right] 
    \geq 1-u.
\end{equation}
with the constant
\begin{equation} \label{eq:c_q,D_constant}
    c_{q,D}  := 
    \frac{1}{2q} 
    + \min\{1/q, 1/5\} + 1 
    + \mathbbm{1}_{q > 3}\frac{D^2 q}{3}.
\end{equation}    
\end{theorem}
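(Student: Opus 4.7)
The strategy is to combine the moment generating function bound for norms of martingales in $(2,D)$-smooth Banach spaces due to \citet{Pinelis94} with the optimized Chernoff argument of \citet{rio2017constants} for real-valued martingales under higher conditional moments. As a first reduction, since the composition of a norm with a nondecreasing convex function sends a martingale to a submartingale, $(\exp(\lambda \norm{M_i}))_{0 \leq i \leq n}$ is a nonnegative submartingale for every $\lambda \geq 0$, and Doob's maximal inequality gives $\P[\max_{i \leq n} \norm{M_i} > t] \leq e^{-\lambda t}\,\E\exp(\lambda \norm{M_n})$, so it suffices to bound $\E \exp(\lambda \norm{M_n})$.

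Next, I would apply Pinelis' MGF comparison in the $(2,D)$-smooth setting. Iterating the smoothness inequality along the filtration yields a scalar bound of the form
\begin{equation*}
    \E \exp(\lambda \norm{M_n}) \leq 2 \exp\Bigl(\tfrac{\lambda^2 D^2 \sigma^2}{2} + R_q(\lambda)\Bigr),
\end{equation*}
where the quadratic contribution $\tfrac{\lambda^2 D^2 \sigma^2}{2}$ comes from expanding $e^x \leq 1 + x + \tfrac{x^2}{2} + \rho_q(x)$ and using the martingale-difference property to cancel the linear term, while $R_q(\lambda)$ is the aggregated $q$-th order remainder. The factor $D^2$ in front of $\sigma^2$ is exactly what produces the subgaussian piece $D \sigma \sqrt{2 \log(2/u)}$ at the end.

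Third, I would estimate $R_q(\lambda)$ via Rio's Taylor-type inequality. Rio bounds $e^x - 1 - x - \tfrac{x^2}{2}$ by a hybrid polynomial/exponential expression tailored to the $q$-th moment; under the hypothesis $\sum_i \E_{i-1} \norm{\xi_i}^q \leq C_q^q$, this yields an explicit $\lambda^q C_q^q$ correction to $\log \E \exp(\lambda \norm{M_n})$ whose scalar constants $\tfrac{1}{2q} + \min\{1/q, 1/5\} + 1$ match those of Rio's real-valued argument. The additional term $\mathbbm{1}_{q > 3}\,D^2 q/3$ should arise because, for $q > 3$, Rio's estimate of $\rho_q$ contains a second-moment cross-term that picks up the Pinelis factor $D^2$ when lifted to the Banach-space setting, whereas for $q \leq 3$ this cross-term is absorbed into the polynomial remainder without incurring $D^2$.

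Finally, I would split the confidence budget as $u/2 + u/2$ and apply Markov twice to the MGF bound: optimizing $\lambda$ against the $\tfrac{\lambda^2 D^2 \sigma^2}{2}$ term produces the subgaussian threshold $D \sigma \sqrt{2 \log(2/u)}$, while optimizing $\lambda$ against the $\lambda^q$ correction produces the polynomial threshold $c_{q,D} C_q (2/u)^{1/q}$; summing the two yields \eqref{eq:main_bound}. The main obstacle will be step three: accurately tracking Rio's constants through the $(2,D)$-smooth Pinelis comparison and verifying that the regime changes at $q = 3$ and at $q = 5$ (the latter visible in $\min\{1/q, 1/5\}$) transfer correctly to the Banach-space setting, in particular identifying the exact point at which the smoothness constant $D$ begins to interact nontrivially with Rio's remainder estimate.
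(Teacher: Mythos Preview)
Your proposal has a genuine gap: you attempt to bound the moment generating function $\E\exp(\lambda\norm{M_n})$ directly, but under only a $q$-th conditional moment assumption this quantity need not be finite for any $\lambda>0$. Pinelis' supermartingale bound requires control of $\E_{i-1}\bigl[e^{t\norm{\xi_i}}-1-t\norm{\xi_i}\bigr]=\sum_{k\geq 2}\tfrac{t^k}{k!}\,\E_{i-1}[\norm{\xi_i}^k]$, i.e.\ of \emph{all} moments of the increments; for $k>q$ you have no bound whatsoever. There is no ``Rio Taylor-type inequality'' that turns a $q$-th moment assumption into a finite $R_q(\lambda)$ without an additional boundedness hypothesis. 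Consequently your step three cannot be carried out as stated, and the polynomial tail $(2/u)^{1/q}$ cannot be produced from a Chernoff optimization against a $\lambda^q$ correction in the log-MGF.

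The paper closes this gap by a truncation step that you are missing entirely. One introduces $\tilde\xi_i:=\mathbbm{1}_{\norm{\xi_i}\leq L}\,\xi_i$ and the associated truncated martingale $\tilde M_n$, and splits (via quantile-function machinery rather than a union bound) into a truncation error $\norm{M_n-\tilde M_n}$ and the bounded-increment piece $\norm{\tilde M_n}$. The truncation error is handled by Markov on the $q$-th moment; with $L=(2/u)^{1/q}$ this produces exactly the $\tfrac{1}{2q}$ contribution to $c_{q,D}$ (so your attribution of this constant to the MGF remainder is incorrect). For $\tilde M_n$, Pinelis' bound now applies because the increments are bounded by $L$, and the cumulant bound splits into $\ell_0(t)=D^2\sigma^2 t^2/2$, a finite sum $\ell_1(t)$ over $2<k<q$ (empty when $q\leq 3$, which is the actual source of the $\mathbbm{1}_{q>3}$ term), and a tail $\ell_2(t)=L^{-q}\sum_{k\geq q}(Lt)^k/k!$ controlled via $\E_{i-1}[\norm{\tilde\xi_i}^k]\leq L^{k-q}$. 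The Chernoff step is then an inverse Legendre transform of $\ell_0+\ell_1+\ell_2$ at $\hat x=\log(2/u)$, not a $u/2+u/2$ split; the $\min\{1/q,1/5\}+1$ constant comes from $\ell_2$ evaluated at $t=\hat x/L$, and the $D^2q/3$ piece from bounding $\ell_1$ when $q>3$.
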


\cref{thm:main} essentially extends the result of
\citet[][Corollary 3.2]{rio2017constants}
for real martingales to the Banach space
case.
We prove \cref{thm:main} in \cref{sec:proof_main}.

\subsection{Rearranging to a tail bound}
Directly inverting the upper bound
given in \eqref{eq:main_bound} in order to
derive a tail bound requires solving a
transcendental equation. Instead, we split
the sum and handle both terms separately.
Let $t > 0$. We
want to find $u^*$ such that simultaneously, we have
\begin{equation*}
    D \sigma \sqrt{2\log(2/u^*)} \leq \frac{t}{2}
    \quad
    \text{and}
    \quad
    c_{q,D}C_q \left(\frac{2}{u^*}\right)^{1/q} 
    \leq \frac{t}{2},
\end{equation*}
which by \cref{thm:main} implies 
$\P\left[ 
    \max_{i \in \{1, \dots n \}} \norm{M_n}
    > t
\right] \leq u^*$.
We can rearrange both terms and determine
the simultaneous conditions
\begin{equation}
    \label{eq:tail_rearranged}
    u^* \geq 2 \exp 
    \left( 
    - \frac{t^2}{8 D^2 \sigma^2}
    \right)
    \quad
    \text{and}
    \quad
    u^* \geq
    2 \left(
    \frac{2 c_{q,D}C_q }{t}
    \right)^q.
\end{equation}
We can now simply set $u^*$ to be
the sum of the two terms on the right hand sides in
\eqref{eq:tail_rearranged}. We
have shown the following.

\begin{corollary}[Fuk--Nagaev inequality, tail bound version]
\label{cor:fn_tail_bound}
Under the assumptions of \cref{thm:main}, we have
    \begin{equation*}
    \P\left[ 
    \max_{i \in \{1, \dots n \}} \norm{M_n}
    > t
    \right] 
    \leq 
    2 \left(
    \frac{2 c_{q,D}C_q }{t}
    \right)^q
    +
    2 \exp 
    \left( 
    - \frac{t^2}{8 D^2 \sigma^2}
    \right).
\end{equation*}
for all $t > 0$.
\end{corollary}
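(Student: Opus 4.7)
My plan is to apply \cref{thm:main} at a single carefully chosen confidence level $u^\ast \in (0,1)$ and then rearrange. The natural candidate is
\begin{equation*}
    u^\ast := 2\exp\!\left(-\frac{t^2}{8 D^2 \sigma^2}\right) + 2\left(\frac{2 c_{q,D} C_q}{t}\right)^q,
\end{equation*}
i.e.\ exactly the right-hand side of the claimed tail bound. The goal is to show that plugging this $u^\ast$ into the confidence-interval bound of \cref{thm:main} collapses the right-hand side of \eqref{eq:main_bound} to $t$.

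First, I would dispose of the trivial regime $u^\ast \geq 1$, for which the corollary holds automatically since probabilities cannot exceed $1$. This leaves the case $u^\ast \in (0,1)$, which is precisely the range on which \cref{thm:main} is valid.

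For this remaining case, the key observation is that both summands defining $u^\ast$ are nonnegative, so $u^\ast$ is individually bounded below by each of them. Consequently both inequalities in \eqref{eq:tail_rearranged} are satisfied at $u^\ast$. Inverting them gives $D\sigma\sqrt{2\log(2/u^\ast)} \leq t/2$ and $c_{q,D}C_q(2/u^\ast)^{1/q} \leq t/2$; summing the two shows that the confidence-interval bound in \eqref{eq:main_bound} at $u = u^\ast$ is at most $t$. Applying \cref{thm:main} and passing to complements yields the corollary.

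There is no genuine obstacle in this argument. The only conceptual point is recognizing that splitting the target $t$ equally between the exponential and polynomial half-bounds sidesteps the transcendental inversion one would face by attacking the full confidence-interval bound directly; once that split is made, the remaining work is pure book-keeping.
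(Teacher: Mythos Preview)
Your proposal is correct and follows essentially the same route as the paper: choose $u^\ast$ to be the sum of the two candidate tail terms, note that $u^\ast$ then dominates each summand separately so both rearranged inequalities \eqref{eq:tail_rearranged} hold, and conclude via \cref{thm:main}. Your explicit treatment of the trivial regime $u^\ast \geq 1$ is a small refinement the paper leaves implicit.
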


\subsection{Independent sums}
For convenience, we consider the special case that $\xi_1,\ldots,\xi_n$ 
are independent, identically distributed and centered
random variables taking values 
in $\X$ and fulfill
\begin{equation*}
    \sigma^2  :=  \E[ \norm{\xi_i}^2 ] < \infty 
    \quad \text{and} \quad 
     C^q_q  := \E[ \norm{\xi_i}^q ] < \infty,
\end{equation*}
instead of the assumption \eqref{eq:mom}. 
Then for all $u\in(0,1)$, we 
straightforwardly obtain the bound
\begin{equation}
    \label{eq:fn_independent}
    \P\left[ \max_{k=1,\ldots,n} 
        \left \Vert \frac1n \sum_{i=1}^k \xi_i \right\Vert
        \leq D \sigma 
            \sqrt{ \frac{2\log(2/u)}{n} } 
            + c_{q,D}C_q \left(\frac{2}{u n^{q-1} }\right)^{1/q}
    \right] 
    \geq 1-u
\end{equation}
by applying \cref{thm:main} to the martingale given by
$M_k = \sum_{i=1}^k \xi_k$, $k=1,\ldots,n$
with respect to the naturally induced filtration.
The bound \eqref{eq:fn_bound} 
generalizes and sharpens the Hilbert space bound
by \citet[][Corollary 5.3]{mollenhauer2025regularized}
by giving precise constants and extending it to all real $q > 2$.


\section{A McDiarmid bound for heavy-tailed functions}

As an application of \cref{thm:main},
we give a bound for Banach-space valued functions
of independent random variables that generalizes
the well-known bound by McDiarmid. Instead of the
classical bounded differences assumption \citep{mcdiarmid}, we 
only assume a uniform control of the individual higher moments
of the conditional distributions in each coordinate.
Similar inequalities have been derived by \citet{kontorovich2014}
and \citet{maurerpontil2021} under assumptions on
conditional Orlicz norms, 
allowing to weaken the bounded difference 
condition and giving exponential concentration.
In both cases, the authors require additional tools
to incorporate the componentwise Orlicz norms into the
final composite function.
In our case, we can directly work with the classical 
martingale approach
and apply \cref{thm:main}, as assumptions about finite
higher moments are
naturally compatible with this type of decomposition.
Moreover, our assumptions can be easily verified
for Lipschitz and Hölder continuous functions.

We consider independent random 
variables $Z_1, \dots, Z_n$
defined on a common probability space $(\Omega, \sigalg, \P)$ 
taking values
in measurable spaces
$(\mathcal{Z}_1, \Xi_1), \dots, (\mathcal{Z}_n, \Xi_n)$.
Let $\mu_1, \dots, \mu_n$ denote the corresponding distributions
of $Z_i, \dots, Z_n$.
We write $\mathcal{Z} := \prod_{i=1}^n \mathcal{Z}_1$
for the product space and equip it with the product $\sigma$-field.
Furthermore, we introduce the abbreviations
$\bz := (z_1, \dots, z_n) \in \mathcal{Z}$ and 
$\bZ := (Z_1, \dots, Z_n)$ and
for a measurable map $f$ on $\mathcal{Z}$, we introduce the 
\emph{conditional versions} of $f$ as
\begin{equation*}
    f_i(\bz;\bZ)
    :=
    f(z_1, \dots z_{i-1}, Z_i, z_{i+1}, \dots, z_n ),
\end{equation*}
which allow to conveniently investigate the
influence of $\mu_i$ on the $i$-th coordinate of $f$ 
when all other arguments are fixed.
%
%
\begin{corollary}[McDiarmid inequality for heavy-tailed functions]
Let $(\X, \norm{\cdot})$ be a $(2, D)$-smooth Banach space. 
Consider a measurable function $f: \mathcal{Z} \rightarrow \X$ such that
\begin{equation}
    \label{eq:mcdiarmid_constants}
    \begin{split}
    \sigma^2 &:=
    \sum_{i=1}^n
    \sup_{\bz \in \mathcal{Z}}
    \E[ \norm{ f_i(\bz;\bZ) - \E[ f_i(\bz;\bZ)] }^2 ] < \infty
    \quad \text{and} \quad \\
    C^q_q &:=
    \sum_{i=1}^n
    \sup_{\bz \in \mathcal{Z}}
    \E[ \norm{ f_i(\bz;\bZ) - \E[ f_i(\bz;\bZ)] }^q ] < \infty
    \quad \text{for some } q>2.
    \end{split}
\end{equation}    
Then for all $u\in(0,1)$, we have
\begin{equation*}
     \P \left[
        \left\| f(Z_1,\ldots,Z_n) - \E[f(Z_1,\ldots,Z_n)] \right\|_\X
        \leq D \sigma \sqrt{2\log(2/u)} + c_{q,D}C_q \left(\frac{2}{u}\right)^{1/q}
    \right] 
    \geq 1-u,
\end{equation*}
where the constant $c_{q,D}$ is given by \eqref{eq:c_q,D_constant}.
\end{corollary}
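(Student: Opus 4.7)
The plan is to construct the Doob martingale associated with $f(\bZ)$ and then invoke \cref{thm:main}. I would set $\sigalg_0 := \{\emptyset, \Omega\}$, $\sigalg_k := \sigma(Z_1, \ldots, Z_k)$ for $k = 1, \ldots, n$, and define
\begin{equation*}
M_k := \E[f(\bZ) \mid \sigalg_k] - \E[f(\bZ)], \qquad 0 \leq k \leq n.
\end{equation*}
Then $(M_k)$ is an $\cX$-valued martingale adapted to $(\sigalg_k)$ with $M_0 = 0$ and $M_n = f(\bZ) - \E[f(\bZ)]$. It therefore suffices to verify the conditional moment hypotheses of \cref{thm:main} for the differences $\xi_i := M_i - M_{i-1}$ with the same $\sigma^2$ and $C_q^q$ as in \eqref{eq:mcdiarmid_constants}, and then to apply \cref{thm:main} to $M_n$.

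The central step is the conditional moment estimate
\begin{equation*}
\E_{i-1}\bigl[\norm{\xi_i}^p\bigr] \;\leq\; \sup_{\bz \in \mathcal{Z}} \E\bigl[\norm{f_i(\bz; \bZ) - \E[f_i(\bz; \bZ)]}^p\bigr], \qquad p \in \{2, q\}.
\end{equation*}
To derive this, I would use the mutual independence of $Z_1, \ldots, Z_n$ to write $\E[f(\bZ) \mid \sigalg_i]$ as the product-measure integral of $f(Z_1, \ldots, Z_i, z_{i+1}, \ldots, z_n)$ against $\mu_{i+1} \otimes \cdots \otimes \mu_n$. Conditionally on $\sigalg_{i-1}$, this exhibits $\xi_i$ as $\psi(Z_i) - \E_{i-1}[\psi(Z_i)]$ for a suitable $\cX$-valued $\psi$, which in turn equals the integral against $\mu_{i+1} \otimes \cdots \otimes \mu_n$ of the difference $f(Z_1, \ldots, Z_{i-1}, Z_i, z_{i+1}, \ldots, z_n) - \int f(Z_1, \ldots, Z_{i-1}, z', z_{i+1}, \ldots, z_n)\, d\mu_i(z')$. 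A single application of Jensen's inequality to the convex function $\norm{\cdot}^p$ then pulls the integral out of the norm, and for each fixed slicing $(z_{i+1}, \ldots, z_n)$ the inner integrand is precisely $f_i(\bz; \bZ) - \E[f_i(\bz; \bZ)]$ with $\bz = (Z_1, \ldots, Z_{i-1}, \quark, z_{i+1}, \ldots, z_n)$. Bounding this by the supremum over $\bz \in \mathcal{Z}$ and reordering the iterated integrals by Fubini yield the claimed estimate.

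Summing over $i$ gives $\esssup \sum_{i=1}^n \E_{i-1}[\norm{\xi_i}^p] \leq \sigma^2$ for $p = 2$ and $\leq C_q^q$ for $p = q$, matching the hypotheses of \cref{thm:main} with the exact constants from \eqref{eq:mcdiarmid_constants}. The stated McDiarmid-type bound then follows by applying \cref{thm:main} to $M_n$ and using $\norm{M_n} = \norm{f(\bZ) - \E[f(\bZ)]}$.

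The step I expect to require the most care is the moment estimate itself. A naïve symmetrization writing $\xi_i = \E'[f_i(\bz; \bZ) - f_i(\bz; \bZ')]$ with an independent copy $\bZ'$ is perfectly valid but produces a spurious factor of $2^p$ via the triangle inequality, inflating $\sigma^2 \mapsto 4\sigma^2$ and $C_q^q \mapsto 2^q C_q^q$ and thereby degrading the constants in the final inequality. The key is to apply Jensen's inequality directly to the conditional-expectation representation of $\xi_i$, so that the centering around $\E[f_i(\bz; \bZ)]$ (rather than around an independent copy) is preserved throughout the argument, yielding the stated bound with no constant loss.
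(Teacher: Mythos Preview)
Your proposal is correct and follows essentially the same route as the paper: construct the Doob martingale, represent $\xi_i$ via the product-measure integral using Fubini/Doob--Dynkin, apply Jensen's inequality to $\norm{\cdot}^p$ to pull out the outer integral, bound by the supremum over $\bz$, and invoke \cref{thm:main}. Your closing remark about avoiding the symmetrization detour (and the attendant $2^p$ loss) is a nice clarification that the paper does not spell out, but the core argument is identical.
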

\begin{proof}
Let $\sigalg_i \subseteq \sigalg$ denote the 
$\sigma$-field induced by $Z_1, \dots, Z_i$ and
set $\sigalg_0 := \{ \emptyset, \Omega \}$.
We define the Doob martingale
\begin{equation*}
    M_i := \E[ f(\bZ) - \E[f(\bZ)] \mid \sigalg_i ],
    \quad
    i = 1, \dots, n
\end{equation*}
and $M_0 = 0$
and corresponding increments 
$\xi_i := M_i - M_{i-1} 
= 
\E[ f(\bZ) \mid \sigalg_i ] - \E[ f(\bZ) \mid \sigalg_{i-1}]$.
Using a well-known representation of the conditional expectations based on Fubini's Theorem and the Doob-Dynkin Lemma, we get
\begin{align*}
    \xi_i &=
    \int
    f(Z_1, \dots, Z_i, z_{i+1}, \dots z_n ) \,
    \dd \mu_{i+1}(z_{i+1}) \otimes \dots \otimes \mu_n(z_n) \\
    & \qquad - \int
    f(Z_1, \dots, Z_{i-1}, z_{i}, \dots z_n ) \,
    \dd \mu_{i}(z_{i}) \otimes \dots \otimes \mu_n(z_n) \\
    &=
    \int
    \left(
    \int
    f(Z_1, \dots, Z_i, z_{i+1}, \dots z_n )
    -
    f(Z_1, \dots, Z_{i-1}, z_{i}, \dots z_n ) \,
    \dd \mu_i(z_i)
    \right)
    \dd \mu_{i+1}(z_{i+1}) \otimes \dots \otimes \mu_n(z_n),
\end{align*}
and we specifically notice that the two
evaluations of $f$ in the parentheses only differ in
the $i$-th coordinate.
Based on the above
expression, we now find that for all $p \geq 1$,
we have
\begin{align*}
    &\esssup
    \E\left[\norm{\xi_i}^p \mid \sigalg_{i-1} \right] \\
    &\qquad\leq
    \esssup
    \int 
    \left\Vert f(Z_1, \dots, Z_{i-1}, \hat z_i, z_{i+1}, \dots z_n )
    - \int
    f(Z_1, \dots, Z_{i-1}, z_{i}, \dots z_n )
    \dd \mu_i(z_i)
    \right\Vert^p \,
     \, \dd \mu_i(\hat z_i) \\
    & \qquad  
    \leq
    \sup_{\bz \in \mathcal{Z}}
    \E[ \norm{ f_i(\bz;\bZ) - \E[ f_i(\bz;\bZ)] }^{p} ].
\end{align*}
We now realize that under the conditional moment assumptions
\begin{equation*}
    \sigma^2 :=
    \sum_{i=1}^n
    \sup_{\bz \in \mathcal{Z}}
    \E[ \norm{ f_i(\bz;\bZ) - \E[ f_i(\bz;\bZ)] }^2 ]
    \quad \text{and} \quad
    C^q_q :=
    \sum_{i=1}^n
    \sup_{\bz \in \mathcal{Z}}
    \E[ \norm{ f_i(\bz;\bZ) - \E[ f_i(\bz;\bZ)] }^q ],
\end{equation*}
the martingale $(M_i)_{0 \leq i \leq n}$ satisfies 
the assumptions of \eqref{thm:main},
so the result follows by using \cref{thm:main}.

\end{proof}

\subsection{Hölder continuous functions}

We briefly give basic conditions under which the
conditional moment assumption
\eqref{eq:mcdiarmid_constants} can be verified.
We consider now a function $f: \mathcal{Z} \to \cX$,
where $\cX$ is a $(2,D)$-smooth Banach space
and $\mathcal{Z} := \prod_{i=1}^n \mathcal{Z}_1$ is the product
of metric spaces
$(\mathcal{Z}_1, d_1), \dots, (\mathcal{Z}_n, d_n)$
equipped with the metric
$d := \sum_{i=1}^n d_i$ and corresponding Borel $\sigma$-field.
Assume that $f$ satisfies the Hölder condition
\begin{equation}
    \label{eq:lipschitz}
    \norm{f(\bz) - f(\hat \bz)} 
    \leq L \, d(\bz, \hat \bz)^\alpha \quad \forall \bz,\hat\bz \in \bZ
\end{equation}
for some fixed $\alpha \in (0,1]$ and $L \geq 0$.
We now find that for all $p \geq 1$,
we have
\begin{align*}
    \sup_{\bz \in \mathcal{Z}}
    \E[ \norm{ f_i(\bz;\bZ) - \E[ f_i(\bz;\bZ)] }^p ]
    &\leq
    \sup_{\bz \in \mathcal{Z}}
    \E_{\bZ, \hat \bZ}[ \norm{ f_i(\bz;\bZ) -  f_i(\bz;\hat \bZ) }^p ] \\
    &\leq L^p \,
    \E_{Z_i, \hat Z_i}[ d_i(Z_i, \hat Z_i)^{\alpha p} ],
\end{align*}
where $\hat\bZ$ is an independent copy of $\bZ$.
In the first step we use Jensen's inequality
and in the second step we use \eqref{eq:lipschitz} 
together with the definition of the
metric $d$ and the fact that the evaluations of $f_i$
only differ in the $i$-th coordinate.
We can now define
\begin{equation}
    \label{eq:mcdiarmid_constants_lip}
    \sigma^2 :=
    L^2
    \sum_{i=1}^n
    \E \left[
    d_i(Z_i, \hat Z_i)^{2 \alpha}
    \right]
    \quad \text{and} \quad
    C^q_q :=
    L^q
    \sum_{i=1}^n
    \E \left[
    d_i(Z_i, \hat Z_i)^{\alpha q}
    \right]
\end{equation}
and the calculation above verifies that the martingale $M_n$
satisfies the assumptions of \cref{thm:main}
with corresponding constants given by 
\eqref{eq:mcdiarmid_constants_lip}
in case they are finite.
%
%
In fact, the quantities $\sigma^2$ and $C^q_q$ can be bounded
further depending on the setting.
If for example the spaces $\mathcal{Z}_i$
are normed spaces with norms $\norm{\cdot}_i$,
we may simplify the assumptions since
for every $p \geq 1$, we have
\begin{equation*}
        \E \left[
        d_i(Z_i, \hat Z_i)^p
        \right]
        \leq
        2^p \, \E \left[
        \norm{Z_i}_i^p
        \right],
\end{equation*}
where we use the triangle inequality, independence,
and that $(x+y)^p \leq 2^{p-1}(x^p + y^p)$ for $x,y>0$.

\section[Proof of Theorem~\ref{thm:main}]{Proof of \cref{thm:main}}
\label{sec:proof_main}

The overall proof strategy follows \citet[][Section 3]{rio2017constants}.
It derives the optimization of a Chernoff bound corresponding to a truncated martingale
in terms of its quantile function which is conveniently expressed as an inverse
Legendre transformation.

The crucial step that allows to transfer the original proof by \cite{rio2017constants}
to the Banach space setting is a bound on the cumulant generating function of the norm of vector-valued martingales
which we obtain by reformulating the classic results by \cite{Pinelis94}.
This bound is structurally identical to the bound of the cumulant
generating function supplied by \cite[][Lemma 3.4]{rio2017constants},
allowing to perform similar optimization steps.
This extends the standard Chernoff bound optimization arguments
Hoeffding's and Bernstein's inequalities
in Banach spaces by \cite{Pinelis94} to the Fuk--Nagaev inequality.

Let $X$ be a real-valued random variable. We define 
the \emph{quantile function}\footnote{
With this definition, $Q_X(u)$ is the largest $1-u$ quantile of $X$.
Note that the quantile function is more commonly defined in terms
of the cumulative distribution function instead of the tail function.
}
\begin{equation*}
    Q_X(u) := \inf\{ t \in \R \mid \P[ X > t ] < u \}, \quad u \in (0,1].
\end{equation*}
Note that $Q_X$ is nonincreasing in $u$.
Furthermore, if $U$ is a uniform random variable on $[0,1]$, then
$Q_X(U)$ is distributed according to $X$.
We additionally define the nonincreasing
\emph{integrated quantile function}
\begin{equation*}
    Q^1_X(u) 
    := 
    \frac{1}{u} \int^u_0 Q_X(s) \, \dd s
    =
    \E[ X \mid X \geq Q(u) ], \quad u \in (0,1]
\end{equation*}
and for $u\in (0,1]$, we also define the following moment expression
\begin{equation*}
    Q_X^\infty(u) :=
    \inf_{t \in (0, \infty)}
    t^{-1} \, \log\left( \E [ \exp(tX) / u ] \right).
\end{equation*}
The function $Q_X^1$ is also called the
\emph{conditional value-at-risk}, see \citet{Pflug2000} and \citet{Rockafellar2000}. We also refer the reader
to \citet{Pinelis2014} for a thorough analysis
of the properties of the functions defined above.

\subsection{Approximation by truncated martingale}

We realize that without loss of generality,
we can consider the simplified assumption
\begin{equation}
    \label{eq:assumptions}
    \sigma^2 :=
    \esssup \sum_{i=1}^n \E_{i-1} [ \norm{\xi_i}^2 ]
    < \infty
    \quad
    \text{ and }
    \quad
    \esssup \sum_{i=1}^n \E_{i-1} [ \norm{\xi_i}^q ] \leq 1.
\end{equation}
The general case as in \cref{thm:main} follows by considering the
scaled martingale $M_n / C_q$.

For a threshold $L > 0$, define the \emph{level-$L$ truncations} of the random variables 
$\xi_i$ as $\tilde{\xi_i} := \mathbbm{1}_{\norm{\xi_i} \leq L} \xi_i$.
The corresponding truncated martingale is given by $\tilde{M_n} = \tilde{\xi_1} + \dots, \tilde{\xi_n}$.
Following \citet{rio2017constants},
we perform the basic Chernoff argument 
that allows to bound the tails of $M_n$ 
conveniently in terms 
of quantile functions, see \cref{app:quantile_functions}
for the definition and basic properties.

We use the shorthand notation 
$M_n^* := \max_{i \in \{1, \dots n \}} \norm{M_n}$.
For all $u\in(0,1)$ we now get
\begin{align}
    \label{eq:quantile_function_bound}
    Q_{M_n^*}(u) \leq
    Q_{\norm{M_n}}^1(u) 
    &\leq
    Q^1_{\norm{M_n - \tilde{M}_n} + \norm{\tilde{M}_n}}(u) \nonumber \\
    &\leq
    Q^1_{\norm{M_n - \tilde{M}_n}}(u)
    +
    Q^1_{\norm{\tilde{M}_n}}(u) \nonumber \\
    &\leq
    Q^1_{\norm{M_n - \tilde{M}_n}}(u)
    +
    Q^\infty_{\norm{\tilde{M}_n}}(u),
\end{align}
where we used in turn \cref{lem:submartingale_inequality}, 
\cref{lem:quantile_monotonicity},
\cref{lem:subadditivity}, and
\cref{lem:quantile_bounds}.

\subsection{Bounding the approximation error}
Let $u\in(0,1)$.
Following \citet{rio2017constants}, we can proceed to bound
\begin{align*}
    Q^1_{\norm{M_n - \tilde{M}_n}}(u)
    &\leq
    \frac{1}{u} \E[ \norm{M_n - \tilde{M}_n } ] 
    &&
    \text{(\cref{lem:variational_formulation})} \\
    & \leq
    \frac{1}{u}
    \sum_{i = 1}^n 
    \E \left[ \E_{i-1} [ \norm{\xi_i - \tilde{\xi}_i }  ] \right]
    &&
    \text{(triangle inequality)} \\
    &\leq
    \frac{1}{u}
    \sum_{i = 1}^n 
    \E \left[ 
         \int^\infty_L \P[ \norm{\xi_i - \tilde{\xi}_i }  > s  \mid  \sigalg_{i-1} ] \, \dd s 
    \right]
    &&
    \text{(truncation \& tail integration)} \\
    &\leq
    \frac{1}{u q L^{q-1}}
    \sum_{i = 1}^n 
    \E \left[ 
        \int^\infty_L q s^{q-1} \P[ \norm{\xi_i }  > s  \mid  \sigalg_{i-1}] \, \dd s 
    \right]
    &&
    \text{(multiplication by $1$)} \\
    &\leq
    \frac{1}{u q L^{q-1}}
    \sum_{i = 1}^n 
    \E \left[ 
        \int^\infty_0 \P[ \norm{\xi_i }^q > s  \mid  \sigalg_{i-1} ] \, \dd s 
    \right]
    &&
    \text{(integral substitution)} \\
    & =\frac{1}{u q L^{q-1}}
    \sum_{i = 1}^n 
    \E \left[ 
        \E_{i-1} [ \norm{\xi_i}^q ]
    \right]
    && \text{(tail integration)} \\
    & \leq 
    \frac{1}{u q L^{q-1}} \esssup \sum_{i=1}^n \E_{i-1} [ \norm{\xi_i}^q ]
\end{align*}
from which we finally obtain 
\begin{equation}
    \label{eq:approximation bound}
    Q^1_{\norm{M_n - \tilde{M}_n}}(u)
    \leq
    \frac{1}{q u^{1/q}}
\end{equation}
by 
\eqref{eq:assumptions}
and choosing the truncation level $L = u^{-1/q}$.

\subsection{Bounding the truncated martingale}
We now need to bound  $Q^\infty_{\norm{\tilde{M}_n}}(u)$.
We also need the following classical result which allows to 
derive sharp bounds for
tail probabilities of martingales in smooth Banach spaces.

\begin{lemma}[Exponential moment bound, \citealt{Pinelis94}, proof of Theorem 3.1]
Let $\X$ be a $(2, D)$-smooth Banach space.
Let $(\tilde M_i)_{0 \leq i \leq n}$ be a martingale in $\X$ adapted to
the nondecreasing filtration $(\sigalg_i)_{0 \leq i \leq n}$
and set $\tilde M_0 = 0$.
Given any fixed $t > 0$, consider the process $(G_i)_{0 \leq i \leq n}$ defined by $G_0 := 1$
and
\begin{equation*}
    G_i := \cosh(t \norm{\tilde M_i}) / \prod_{i=1}^n (1 + e_i), 
    \quad
    i \in \{1, \dots, n\}
\end{equation*}
with
\begin{equation*}
    e_i :=  D^2 
         \E_{i-1}\left[
         e^{t \norm{\tilde \xi_i}}
         - 1
         - t \norm{\tilde \xi_i}
         \right].
\end{equation*}
Then  $(G_i)_{0 \leq i \leq n}$ is a nonnegative supermartingale.
\end{lemma}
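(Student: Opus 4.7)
The plan is to reduce the supermartingale claim to a single per-step inequality
\begin{equation*}
\E_{i-1}\bigl[\cosh(t\norm{\tilde M_i})\bigr] \leq (1 + e_i)\cosh(t\norm{\tilde M_{i-1}}),
\end{equation*}
and to derive this from a pointwise bound on $\cosh(t\norm{x+y})$ dictated by $(2,D)$-smoothness. Nonnegativity of $G_i$ is immediate because $\cosh \geq 1$ and each factor $1 + e_j$ is strictly positive (using $e^s - 1 - s \geq 0$ for $s \geq 0$).

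For the supermartingale property I would first observe that $e_j$ is $\sigalg_{j-1}$-measurable for every $j$, so the entire normalizing product $\prod_{j=1}^{i}(1 + e_j)$ is $\sigalg_{i-1}$-measurable and can be pulled out of $\E_{i-1}[\,\cdot\,]$. The task then reduces to the display above. Once that per-step inequality is in hand, dividing by $\prod_{j=1}^{i}(1+e_j)$ yields $\E_{i-1}[G_i] \leq G_{i-1}$, as desired.

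The central ingredient is a Pinelis-type pointwise inequality: for all $x, y \in \cX$ and $t > 0$,
\begin{equation*}
\cosh(t\norm{x+y}) \leq \cosh(t\norm{x}) + t\sinh(t\norm{x}) \Lambda_x(y) + D^2 \cosh(t\norm{x})\bigl(e^{t\norm{y}} - 1 - t\norm{y}\bigr),
\end{equation*}
where $\Lambda_x \in \cX^*$ with dual norm at most $1$ depends only on $x$. To establish it, I would expand $\cosh$ as a power series, control each even power $\norm{x+y}^{2k}$ by iterating a linearized one-sided version of the defining smoothness identity $\norm{x+y}^2 + \norm{x-y}^2 \leq 2\norm{x}^2 + 2D^2\norm{y}^2$, and reassemble the series. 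Applying the inequality pointwise with $x = \tilde M_{i-1}$ and $y = \tilde\xi_i$ and then taking $\E_{i-1}[\,\cdot\,]$ annihilates the linear term, because $\Lambda_{\tilde M_{i-1}}$ is $\sigalg_{i-1}$-measurable and $\tilde\xi_i$ is a martingale difference; the remaining terms produce exactly $(1+e_i)\cosh(t\norm{\tilde M_{i-1}})$.

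The main obstacle is the pointwise $\cosh$-inequality itself. In a Hilbert space the parallelogram law settles it in a couple of lines, but in a general $(2,D)$-smooth Banach space one must combine a subgradient argument (to identify $\Lambda_x$) with a careful induction on even moments of the norm, and then verify that the residual terms in the power series recombine cleanly into the exponential remainder $e^{t\norm{y}} - 1 - t\norm{y}$. This recombination step is the technical heart of \cite{Pinelis94}; once it is established, the supermartingale conclusion follows by the conditional-expectation computation sketched above.
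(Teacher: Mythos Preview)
The paper does not supply its own proof of this lemma; it is simply quoted as a result extracted from the proof of Theorem~3.1 in \citet{Pinelis94} and then used immediately. Your sketch is a faithful reconstruction of Pinelis's argument: reduce to the one-step estimate $\E_{i-1}[\cosh(t\norm{\tilde M_i})]\le(1+e_i)\cosh(t\norm{\tilde M_{i-1}})$ via the $\sigalg_{i-1}$-measurability of the normalizing product, establish a pointwise second-order expansion of $\cosh(t\norm{\cdot})$ in which the first-order part is a bounded linear functional and the remainder is controlled by $D^2(e^{t\norm{y}}-1-t\norm{y})$, and kill the linear term under $\E_{i-1}$ using the martingale-difference property. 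That is exactly the route Pinelis takes, so there is no methodological difference to compare. One minor point worth recording for completeness: the well-definedness and $\sigalg_{i-1}$-measurability of $\Lambda_{\tilde M_{i-1}}$ is justified because $(2,D)$-smoothness makes the norm G\^ateaux differentiable away from the origin, so $\Lambda_x$ is the unique norming functional and $x\mapsto\Lambda_x(y)$ is a pointwise limit of measurable functions; at $x=0$ the inequality holds with $\Lambda_0=0$ since $\cosh s\le 1+(e^s-1-s)$ for $s\ge 0$ and $D\ge 1$.
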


Therefrom, we obtain
\begin{equation*}
   \E\left[ \cosh(t \norm{\tilde M_n}) / 
   \left\| \prod_{i=1}^n (1 + e_i) \right\|_{L^\infty(\P)} \right]
   \leq
   \E[G_n] 
   \leq
   \E[G_0] = 1.
\end{equation*}
The term $ \left\| \prod_{i=1}^n (1 + e_i) \right\|_{L^\infty(\P)}$ is deterministic, 
implying
\begin{equation}
    \label{eq:pinelis_cos_bound}
    \E\left[ \cosh(t \norm{\tilde M_n}) \right]
   \leq  \left\| \prod_{i=1}^n (1 + e_i) \right\|_{L^\infty(\P)},
\end{equation}
from which we deduce
\begin{align}
    \label{eq:pinelis_mgf_bound}
    \E[ \exp(t \norm{\tilde M_n}) ]
    &\leq
    2 \,\E[ \cosh(t \norm{\tilde M_n}) ] 
    \leq
    2 \left\| \prod_{i=1}^n (1 + e_i) \right\|_{L^\infty(\P)} \nonumber \\
    &\leq
    2\prod_{i=1}^n (1 + \norm{ e_i}_{L^\infty(\P)}  ) 
    \leq
    2\exp\left( \sum_{i=1}^n \norm{ e_i}_{L^\infty(\P)} \right),
\end{align}
where we use
$\cosh(x) = (e^x + e^{-x})/2$, the bound from \eqref{eq:pinelis_cos_bound},
the submultiplicativity and triangle
inequality of the $L^{\infty}(\P)$-norm 
and $1 + x \leq e^x$ for all $x \in \R$.

We now note that $\norm{\tilde \xi_i} \leq \norm{\xi_i}$ almost surely 
implies that the assumptions (\ref{eq:assumptions}) are also valid for the truncated random
random variables $\tilde \xi_i$, which means that we have
\begin{equation}
    \label{eq:assumptions_truncated}
    \esssup \sum_{i=1}^n \E_{i-1} [ \norm{\tilde \xi_i}^2 ]
    < \sigma^2
    \quad
    \text{ and }
    \quad
    \esssup \sum_{i=1}^n \E_{i-1} [ \norm{\tilde \xi_i}^q ] \leq 1.
\end{equation}
The following moment bound is precisely the assertion of
\citet[][Proposition 3.5]{rio2017constants} applied
to the real-valued random variables $\norm{\tilde \xi_i}$.

\begin{lemma}[Moment bound, \citealp{rio2017constants}, Proposition 3.5]
    \label{lem:rio_moment_bounds}
    Let $\tilde \xi_1, \dots, \tilde \xi_n$ be a finite sequence of random 
    variables in a normed space adapted to
    the nondecreasing filtration $(\sigalg_i)_{0 \leq i \leq n}$ satisfying
    (\ref{eq:assumptions_truncated}) and $\norm{\tilde \xi_i} \leq L$
    almost surely. Then we have
    \begin{equation*}
        \esssup \sum_{i=1}^n \E_{i-1} [ \norm{\tilde \xi_i}^k ]
        \leq \sigma^{2 (q-k)/ (q-2)},
        \quad
        k \in [2 , q]
    \end{equation*}
    as well as
    \begin{equation*}
        \esssup \sum_{i=1}^n \E_{i-1} [ \norm{\tilde \xi_i}^k ]
        \leq L^{k-q},
        \quad
        k \geq q.
    \end{equation*}
\end{lemma}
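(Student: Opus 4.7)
The plan is to split the argument into two regimes separated by the exponent $k=q$. For $k\in[2,q]$ I would obtain the bound by interpolation using Hölder's inequality (equivalently, log-convexity of the conditional $L^p$-norms). For $k\geq q$ I would exploit the almost-sure truncation bound $\norm{\tilde\xi_i}\leq L$ to trade an excess power $\norm{\tilde\xi_i}^{k-q}$ for the deterministic factor $L^{k-q}$.

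For the first regime, I would write $k = 2\theta + q(1-\theta)$ with $\theta = (q-k)/(q-2)\in[0,1]$ and apply the conditional Hölder inequality with conjugate exponents $1/\theta$ and $1/(1-\theta)$ to the factorisation $\norm{\tilde\xi_i}^k = \norm{\tilde\xi_i}^{2\theta}\cdot\norm{\tilde\xi_i}^{q(1-\theta)}$, yielding the pointwise estimate
\begin{equation*}
    \E_{i-1}[\norm{\tilde\xi_i}^k] \leq \bigl(\E_{i-1}[\norm{\tilde\xi_i}^2]\bigr)^{\theta} \bigl(\E_{i-1}[\norm{\tilde\xi_i}^q]\bigr)^{1-\theta}.
\end{equation*}
Summing over $i$ and then applying the discrete Hölder inequality with the same exponents to the two nonnegative sequences $(\E_{i-1}[\norm{\tilde\xi_i}^2])_i$ and $(\E_{i-1}[\norm{\tilde\xi_i}^q])_i$ gives
\begin{equation*}
    \sum_{i=1}^n \E_{i-1}[\norm{\tilde\xi_i}^k] \leq \Bigl( \sum_{i=1}^n \E_{i-1}[\norm{\tilde\xi_i}^2] \Bigr)^{\theta} \Bigl( \sum_{i=1}^n \E_{i-1}[\norm{\tilde\xi_i}^q] \Bigr)^{1-\theta}.
\end{equation*}
Taking essential suprema on both sides (using that the $\esssup$ of a product of nonnegative powers is bounded by the product of the $\esssup$s raised to the respective powers) and substituting the hypotheses \eqref{eq:assumptions_truncated} then produces the claimed bound $\sigma^{2\theta}=\sigma^{2(q-k)/(q-2)}$.

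For the second regime $k\geq q$ I would simply factor $\norm{\tilde\xi_i}^k = \norm{\tilde\xi_i}^{k-q}\cdot\norm{\tilde\xi_i}^q$, dominate the first factor by $L^{k-q}$ via the almost-sure truncation hypothesis, take conditional expectations, sum over $i$, and invoke the second part of \eqref{eq:assumptions_truncated} to conclude.

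No step is substantively difficult: the lemma is morally a log-convexity statement for $k\mapsto \esssup\sum_i\E_{i-1}[\norm{\tilde\xi_i}^k]$, anchored at the two prescribed endpoints $k=2,q$ and extended beyond $k=q$ by the truncation. The only point warranting explicit care is the boundary cases $k=2$ and $k=q$ of the first regime, where $\theta\in\{0,1\}$ and the vanishing Hölder factor must be interpreted as $1$; this is consistent with the stated bound, so there is no real obstacle to overcome.
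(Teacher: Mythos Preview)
Your argument is correct and is precisely the standard route: H\"older interpolation (conditional, then discrete) for $k\in[2,q]$, and the trivial truncation bound for $k\geq q$. The paper does not supply its own proof of this lemma---it merely quotes \citet[Proposition~3.5]{rio2017constants}---and Rio's original proof proceeds exactly along the lines you sketch, so there is nothing to compare.
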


With this result, we use the bound of the moment
generating function given by (\ref{eq:pinelis_mgf_bound}) to
the cumulant generating function bound
\begin{align}
    \log \left( \frac{1}{2}  \E[ \exp(t \norm{\tilde M_n}) ] \right)
    &\leq
    \sum_{i=1}^n \norm{ e_i}_{L^\infty(\P)} \nonumber \\
    &=
    D^2 
    \sum_{i=1}^n 
    \left\Vert
    \E_{i-1}\left[
    e^{t \norm{\tilde \xi_i}}
    - 1
    - t \norm{\tilde \xi_i}
    \right]
    \right\Vert_{L^\infty(\P)} \nonumber \\
    &
    \leq
    D^2 
    \sum_{i=1}^n 
    \sum_{k = 2}^\infty
    \left\Vert
    \E_{i-1}\left[
    \norm{\tilde\xi_i}^k
    \right] 
    \right\Vert_{L^\infty(\P)} \frac{t^k}{k!} \nonumber
    \\
    & \leq
    D^2 
    \left(
    \underbrace{
    \sigma^2 \frac{t^2}{2}
    }_{=:\ell_0(t)}
    +
    \underbrace{
    \sum_{ 2 < k < q}
    \sigma^{2 (q-k)/ (q-2)} \frac{t^k}{k!}
    }_{=:\ell_1(t)}
    +
    \underbrace{
    L^{-q} \sum_{k \geq q} \frac{L^kt^k}{k!}
    }_{=:\ell_2(t)}
    \right) \label{eq:cgf_bound}
\end{align}
for all $t > 0$.

\subsection{Chernoff bound optimization}

We proceed to bound $Q^\infty_{\norm{\tilde{M}_n}}(u)$ 
in terms of the \emph{inverse Legendre transform} of
the cumulant generating function given by \eqref{eq:cgf_bound}.
For this, we closely follow the arguments from the proof of \cite[Theorem~3.1(a)]{rio2017constants}.

\begin{definition}[Inverse Legendre transform]
    For a convex function $\psi: [0, \infty) \to [0, \infty)$
    with $\psi(0) = 0$,
    we define the \emph{inverse Legendre transform} 
    as
    \begin{equation*}
        [\mathcal{T}\psi](x) = 
        \inf_{t > 0}
        \{
        t^{-1} (\psi(t) + x)
        \},
        \quad
        x \in [0,\infty].
    \end{equation*}
\end{definition}

We refer to \citet[][ Annexes A \& B]{rio2017dependent}
for some background on the inverse Legendre transform
in the context of the classical Chernoff bound.

We define 
$\ell(t) :=
\log\left( \frac12 \E [ \exp(t\norm{\tilde{M}_n}) ] \right)$.
We have
\begin{align*}
    Q_{\norm{\tilde{M}_n}}^\infty(u) &=
        \inf_{t \in (0, \infty)}
            t^{-1} \, \log\left( \E [ \exp(t\norm{\tilde{M}_n}) / u ] \right)
        && \text{(Definition of $Q_{\norm{\tilde{M}_n}}^\infty(u)$)} \\
    & =
        \inf_{t \in (0, \infty)}
            t^{-1} \, \log\left( \frac12 \E [ \exp(t\norm{\tilde{M}_n}) / (u/2) ] \right) & \\
    & = 
        \inf_{t \in (0, \infty)}
        \frac{\log\left( \frac12 \E [ \exp(t\norm{\tilde{M}_n}) ] \right) +  \log(2/u)}{t} 
        && \text{($\log(ab)=\log(a) + \log(b)$)} \\
    &= 
        \mathcal{T}[\ell](\log(2/u))
        && \text{(Definition of $\mathcal{T}$)} \\
    & \leq \mathcal{T}[D^2(\ell_0 + \ell_1 + \ell_2)](\hat x)
    && \text{(Monotonicity of $\mathcal{T}$ and \eqref{eq:cgf_bound})} ,
\end{align*}
where we defined $\hat x = \log(2/u)$ for brevity and
$\ell_0, \ell_1, \ell_2$ are defined in \eqref{eq:cgf_bound}.
Using the subadditivity of $\mathcal{T}$ in its functional argument \citep[Proposition~2.5(i)]{rio2017constants}, we have
\begin{align*}
     \mathcal{T}[D^2(\ell_0 + \ell_1 + \ell_2)](\hat x) & \leq  \mathcal{T}[D^2\ell_0](\hat x) +  \mathcal{T}[D^2(\ell_1 + \ell_2)](\hat x) \text{ and } \\
     \mathcal{T}[D^2(\ell_0 + \ell_1 + \ell_2)](\hat x) & \leq  \mathcal{T}[D^2(\ell_0 + \ell_1)](\hat x) +  \mathcal{T}[D^2\ell_2](\hat x),
\end{align*}
and hence
\begin{equation*}
     \mathcal{T}[D^2(\ell_0 + \ell_1 + \ell_2)](\hat x) \leq \min
     \left\{ 
      \mathcal{T}[D^2\ell_0](\hat x) +  \mathcal{T}[D^2(\ell_1 + \ell_2)](\hat x),
      \mathcal{T}[D^2(\ell_0 + \ell_1)](\hat x) +  \mathcal{T}[D^2\ell_2](\hat x)
     \right\}. 
\end{equation*}

We now proceed by bounding the individual terms occurring in the right hand side.
\paragraph{Step 1: Bounding $\mathcal{T}[D^2\ell_2](\hat x)$.}
We define
\begin{equation*}
    \psi_q(t) := \sum_{k \geq q} \frac{t^k}{k!},
\end{equation*}
so we may reformulate
\begin{equation*}
    \ell_2(t) =  L^{-q} \sum_{k \geq q} \frac{L^kt^k}{k!} = L^{-q}\psi_q(Lt).
\end{equation*}
We now get
\begin{align*}
     \mathcal{T}[D^2\ell_2](\hat x) & = \inf_{t>0} \frac{D^2\ell_2(t)+\hat x}{t} \\
        & \leq \frac{D^2\ell_2(\hat x /L) + \hat x}{\hat x / L} 
            & \text{(Choose $t=\hat x / L$)} \\
        & = \frac{L}{\hat x} \left(D^2 L^{-q}\psi_q(L \cdot \hat x/L) + \hat x \right) \\
        & = D^2 L^{1-q} \hat x^{-1}\psi_q(\hat x) + L \\
        & \leq D^2 L^{1-q} e^{\hat x}\min\{1/q, 1/5\} + L
            & \text{(\citealp[Lemma~3.6]{rio2017constants})} \\
        & = \left( D^2 L^{-q}e^{\hat x}\min\{1/q, 1/5\} + 1\right)\cdot L.
\end{align*}
We now choose the specific truncation level $L=e^{\hat x / q}$. 
With this choice of $L$ we get
\begin{equation*}
    \mathcal{T}[D^2 \ell_2](\hat x) \leq \left( D^2 L^{-q}e^{\hat x}\min\{1/q, 1/5\} + 1\right)\cdot L = (1+D^2\min\{1/q, 1/5\})\cdot L = \alpha_{q,D} L,
\end{equation*}
where we define $\alpha_{q,D} := D^2\min\{1/q, 1/5\} + 1$.
\paragraph{Step 2: Bounding $\mathcal{T}[D^2\ell_0](\hat x)$.}
Using the definition of $\ell_0$, we have
\begin{equation*}
    \mathcal{T}[\ell_0](\hat x) = \inf_{t>0} \frac{D^2\ell_0(t)+\hat x}{t} = \inf_{t>0} \frac{D^2\sigma^2 \frac{t^2}{2}+\hat x}{t}.
\end{equation*}
Define $f(t)=\frac{\sigma^2 \frac{t^2}{2}+\hat x}{t}$. An elementary calculation 
(setting $f'(t)=0$ and solving for $t$) shows that this function is minimized for $t=2\hat x / (D^2\sigma^2)$ with value $\sqrt{2\hat x}D\sigma$, so we get
\begin{equation*}
    \mathcal{T}[D^2\ell_0](\hat x) = \sqrt{2\hat x}D\sigma.
\end{equation*}
\paragraph{Step 3: Bound for $2 < q\leq 3$.}
Observe that for $q\leq 3$, we have $\ell_1=0$, so
\begin{align*}
      Q_{\norm{\tilde{M}_n}}^\infty(u) & \leq \min\{ 
            \mathcal{T}[D^2\ell_0](\hat x) +  \mathcal{T}[D^2(\ell_1 + \ell_2)](\hat x),
            \mathcal{T}[D^2(\ell_0 + \ell_1)](\hat x) +  \mathcal{T}[D^2\ell_2](\hat x)
            \} \\
        & =  \left( \mathcal{T}[D^2\ell_0](\hat x) +  \mathcal{T}[D^2\ell_2](\hat x) \right) \\
        & \leq \left( \sqrt{2\hat x}D\sigma + \alpha_{q,D} L \right).
\end{align*}
From now on, we assume that $q>3$.
\paragraph{Step 4: Bound for $\mathcal{T}[D^2(\ell_1+\ell_2)](\hat x)$.}
Let $q>3$. We have
\begin{align*}
    \mathcal{T}[D^2(\ell_1+\ell_2)](\hat x) & = \inf_{t>0} \frac{D^2\ell_1(t) + D^2\ell_2(t) + \hat x}{t} \\
        & \leq \frac{L}{\hat x}(D^2\ell_1(\hat x / L) + D^2\ell_2(\hat x / L) + \hat x)
            & \text{(Choose $t=\hat x/ L$)} \\
        & = \frac{L}{\hat x} \cdot (D^2\ell_2(\hat x / L) + \hat x) + \frac{L}{\hat x}D^2\ell_1(\hat x / L) \\
        & \leq \alpha_{q,D} L +  \frac{L}{\hat x}D^2\ell_1(\hat x / L),
\end{align*}
where we bound the term involving $\ell_2$ with the
argument from Step 1.
It remains to bound $ \frac{L}{\hat x}D^2\ell_1(\hat x / L)$.
We recall from Step 1 that we chose $L=e^{\hat x / q}$, so $\hat x / L = \hat x e^{-\hat x / q}$. 
We have
\begin{equation*}
    \frac{\hat x}{q L} = \frac{\hat x}{q} e^{-\hat x / q} \leq \sup_{s>0} se^{-s} = \frac{1}{e}
\end{equation*}
and hence 
\begin{equation*}
    \frac{\hat x}{L} \leq \frac{q}{e}.
\end{equation*}
Furthermore,
\begin{equation*}
    t^{-2}\ell_1(t) = t^{-2}\sum_{2<k<q} \sigma^{\frac{2(q-k)}{q-2}}\frac{t^k}{k!}
    =  \sum_{2<k<q} \sigma^{\frac{2(q-k)}{q-2}}\frac{t^{k-2}}{k!}
\end{equation*}
shows that $\Rp \ni t \mapsto t^{-2}\ell_1(t)$ is increasing,
so we get
\begin{equation*}
    \left(\frac{\hat x}{L}\right)^{-1}\ell_1(\hat x/ L) = \frac{\hat x}{L}   \left(\frac{\hat x}{L}\right)^{-2} \ell_1(\hat x/ L)
    \leq \frac{\hat x}{L}   \left(\frac{q}{e}\right)^{-2} \ell_1(q/e).
\end{equation*}
This allows us to proceed by bounding
\begin{align*}
    \mathcal{T}[D^2(\ell_1+\ell_2)](\hat x) & 
    \leq \alpha_{q,D} L 
        +  D^2 \frac{L}{\hat x} \ell_1(\hat x / L) \\
        & \leq \alpha_{q,D} L 
        + D^2 \frac{\hat x}{L} 
            \left(\frac{q}{e}\right)^{-2} \ell_1(q/e) \\
        & =  \alpha_{q,D} L 
        + D^2\frac{\hat x e}{q} \frac{e}{q} 
            \frac{1}{L} \ell_1(q/e) \\
        & \leq  \alpha_{q,D} L 
        + D^2\frac{\hat x e}{3} \frac{e}{qL} \ell_1(q/e) 
            & \text{($q>3$)} \\
        & \leq \alpha_{q,D} L 
        + D^2\frac{\hat x e}{3} \ell_1(q/e). 
            & \text{($q>3$, $L\geq 1$, $e<3$)}
\end{align*}
\paragraph{Step 5: Bound on $\mathcal{T}[D^2(\ell_0+\ell_1)](\hat x)$.}
We first observe that
\begin{equation*}
    \sigma^{\frac{2(q-k)}{q-2}} = \sigma^{\frac{2(2-k)}{q-2}}\sigma^{\frac{2(q-2)}{q-2}} 
        =  \left(\sigma^{-\frac{2}{q-2}}\right)^{k-2}\sigma^2.
\end{equation*}
For $t>0$, we then obtain
\begin{align*}
    \ell_0(t) + \ell_1(t) & = \sigma^2 \frac{t^2}{2} + \sum_{2<k<q}  \sigma^{\frac{2(q-k)}{q-2}}\frac{t^k}{k!} \\
        & = \sum_{2 \leq k < q} \sigma^{\frac{2(q-k)}{q-2}}\frac{t^k}{k!} \\
        & = t^2 \sum_{2 \leq k < q} \sigma^{\frac{2(q-k)}{q-2}}\frac{t^{k-2}}{k!} \\
        & = \sigma^2 t^2 \sum_{2 \leq k < q}  \left(\sigma^{-\frac{2}{q-2}}\right)^{k-2} \frac{t^{k-2}}{k!} \\
        & \leq \sigma^2 t^2 \sum_{k \geq 2} \left( \sigma^{-\frac{2}{q-2}} t\right)^{k-2}\frac{1}{k!}
\end{align*}
and since we have
\begin{align*}
    \sum_{k \geq 2} \left( \sigma^{-\frac{2}{q-2}} t\right)^{k-2}\frac{1}{k!}
    & = \frac12 + \sum_{k \geq 3} \left( \sigma^{-\frac{2}{q-2}} t\right)^{k-2}\frac{1}{k!} \\
    & = \frac12 + \sum_{k \geq 3} \left( \sigma^{-\frac{2}{q-2}} t\right)^{k-2}\frac{1}{2 \cdot 3 \cdot \ldots \cdot k} \\
    & \leq \frac12 + \sum_{k \geq 3}  \left( \sigma^{-\frac{2}{q-2}} t\right)^{k-2}\frac12 \cdot \frac{1}{3^{k-2}} \\
    & = \frac12 + \frac12 \sum_{k \geq 3} \left( \sigma^{-\frac{2}{q-2}} 
    \cdot \frac{t}{3}\right)^{k-2} \\
    & = \frac12 \sum_{k \geq 2} \left( \sigma^{-\frac{2}{q-2}} 
    \cdot \frac{t}{3} \right)^{k-2},
\end{align*}
we end up with the bound
\begin{equation*}
    \ell_0(t) + \ell_1(t) \leq \sigma^2 t^2\frac12 \sum_{k \geq 2} 
    \left( \sigma^{-\frac{2}{q-2}} \cdot \frac{t}{3} \right)^{k-2}.
\end{equation*}
Note that we arrived at a geometric series, so if 
$\sigma^{-\frac{2}{q-2}} \cdot \tfrac{t}{3} < 1$, we find that
\begin{equation*}
    \ell_0(t) + \ell_1(t) \leq \frac{\sigma^2 t^2}{2} 
    \cdot \frac{1}{1-\sigma^{-\frac{2}{q-2}} \cdot \frac{t}{3}}.
\end{equation*}
Based on this insight, we can now continue with
\begin{align*}
    \mathcal{T}[D^2(\ell_0+\ell_1)](\hat x) & = \inf_{t>0} \frac{D^2(\ell_0(t)+\ell_1(t))+\hat x}{t} \\
        & \leq \inf_{\substack{t>0\\\sigma^{-\frac{2}{q-2}} \cdot \tfrac{t}{3} < 1}} \frac{D^2(\ell_0(t)+\ell_1(t))+\hat x}{t} \\
        & \leq \inf_{\substack{t>0\\\sigma^{-\frac{2}{q-2}} \cdot \tfrac{t}{3} < 1}} \frac{D^2 \sigma^2 t^2}{2(1-\sigma^{-\frac{2}{q-2}} \cdot \frac{t}{3})t} + \frac{\hat x}{t} \\
        & = \inf_{\substack{t>0\\\sigma^{-\frac{2}{q-2}} \cdot \tfrac{t}{3}< 1}} \frac{D^2 \sigma^2 t}{2(1-\sigma^{-\frac{2}{q-2}} \cdot \frac{t}{3})} + \frac{\hat x}{t} \\
        & = \sigma^{-\frac{2}{q-2}} \cdot \frac{1}{\hat x} + \sqrt{2\hat x D^2 \sigma^2},
\end{align*}
where in the last step we used \cref{lem:bercuEtAl15_(2.17)} with 
$c=\sigma^{-\frac{2}{q-2}}/3$, $v=D^2 \sigma^2$, and $x=\hat x$.
\paragraph{Step 6: Final bound for $q>3$.}
Combining Steps 2, 4, 5, and 1, we have
\begin{align*}
     Q_{\norm{\tilde{M}_n}}^\infty(u) & \leq \min
     \left\{ 
            \mathcal{T}[D^2\ell_0](\hat x) +  \mathcal{T}[D^2(\ell_1 + \ell_2)](\hat x),
            \mathcal{T}[D^2(\ell_0 + \ell_1)](\hat x) +  \mathcal{T}[D^2\ell_2](\hat x)
            \right\} \\
    & \leq \min
    \left\{
            D\sigma\sqrt{2\hat x} + \alpha_{q,D} L 
            + D^2\frac{\hat x e}{3}\ell_1(q/e),
            \sigma^{-\frac{2}{q-2}}\frac{\hat x}{3} + D\sigma\sqrt{2\hat x} + \alpha_{q,D} L
        \right\} \\
    & = D\sigma\sqrt{2\hat x} + \alpha_{q,D} L + \frac{\hat x}{3}
    \min \left\{
            D^2 e\ell_1(q/e),  \sigma^{-\frac{2}{q-2}}
         \right\}.
\end{align*}
Let us turn to bounding $\min\{ e\ell_1(q/e),  \sigma^{-\frac{2}{q-2}}\}$, for which we will use a case distinction.
\textit{Case 1: $\sigma^{-\frac{2}{q-2}} \leq e$.}
In this case, immediately we have
\begin{align*}
    \min\{ D^2 e\ell_1(q/e),  \sigma^{-\frac{2}{q-2}}\} 
    \leq \min\{ e\ell_1(q/e), e\} \leq e \leq D^2e.
\end{align*}
\textit{Case 2: $\sigma^{-\frac{2}{q-2}} > e$.}
Note that this is equivalent to 
$\sigma^{\frac{2}{q-2}} < 1/ e$, so for all $k\in (2,q)$ 
we have
\begin{align*}
    \left(\sigma^{\frac{2}{q-2}}\right)^{q-k} = \sigma^{\frac{2(q-k)}{q-2}} \leq (1/e)^{q-k} = e^{k-q}.
\end{align*}
We can use this to show
\begin{align*}
     \ell_1(q/e) & = \sum_{ 2 < k < q} \sigma^{2 (q-k)/ (q-2)} \frac{(q/e)^k}{k!} \\
     & \leq e^{-q} \sum_{ 2 < k < q} \frac{q^k}{k!} \\
     & \leq e^{-q} \sum_{k \geq 0} \frac{q^k}{k!} \\
     & = e^{-q}e^q = 1,
\end{align*}
where we used in the first step that
\begin{align*}
    \sigma^{2 (q-k)/ (q-2)} (q/e)^k & =   \sigma^{2 (q-k)/ (q-2)} e^{-k} q^k \\
    & \leq e^{k-q}e^{-k} q^k \\
    & = e^{-q} q^k.
\end{align*}
We hence find that
\begin{align*}
     \min\{ D^2 e\ell_1(q/e),  \sigma^{-\frac{2}{q-2}}\} \leq \min\{ D^2e,  \sigma^{-\frac{2}{q-2}}\} \leq D^2 e.
\end{align*}
Altogether, we have
\begin{align*}
     Q_{\norm{\tilde{M}_n}}^\infty(u) & \leq 
         D\sigma\sqrt{2\hat x} + \alpha_{q,D} L + \frac{\hat x}{3}\min\{
            D^2 e\ell_1(q/e),  \sigma^{-\frac{2}{q-2}}
         \}  \\
        & \leq D\sigma\sqrt{2\hat x} + \alpha_{q,D} L + \frac{D^2 e\hat x}{3}.
\end{align*}
\paragraph{Step 7: Bound for all $q > 2$.}
Combining Step 3 (bound for $2 < q\leq 3$) and and Step 6 (bound for $ q >3$), 
and using the definitions
\begin{align*}
    \hat x & = \log(2/u), \\
    L & = e^{\hat x / q} = \exp(\log(2/u)/q) = (2/u)^{\frac{1}{q}},
\end{align*}
we finally get
\begin{align*}
    Q_{\norm{\tilde{M}_n}}^\infty(u) &  \leq D\sigma\sqrt{2\hat x} + \alpha_{q,D} L + \mathbbm{1}_{q > 3}\frac{e\hat x}{3} \\
    & = D \sigma\sqrt{2\log(2/u)} + \alpha_{q,D} (2/u)^{\frac{1}{q}} + \mathbbm{1}_{q > 3}\frac{D^2 e \log(2/u)}{3}
\end{align*}

Recall that we have
\begin{align*}
     Q^1_{\norm{M_n - \tilde{M}_n}}(u) & \leq  \frac{1}{u q L^{q-1}} \esssup \sum_{i=1}^n \E_{i-1} [ \norm{\xi_i}^q ], 
\end{align*}
so \eqref{eq:assumptions} and $L= (2/u)^{\frac{1}{q}}$ leads to
\begin{align*}
     Q^1_{\norm{M_n - \tilde{M}_n}}(u) & \leq  
      \frac{1}{u q \left((2/u)^{\frac{1}{q}}\right)^{q-1}} \\
      & = \frac{1}{u q (2/u)^{1-1/q}} \\
      & = u^{-1}u^{\frac{q-1}{q}} 2^{-\frac{q-1}{q}}
      = u^{-1/q}2^{1/q-1}.
\end{align*}
Using the bound on $Q_{\tilde{M}_n^*}(u)$ in \eqref{eq:quantile_function_bound}, we then obtain
\begin{align}
    Q_{M_n^*}(u) & \leq Q^1_{\norm{M_n - \tilde{M}_n}}(u) 
        + Q^\infty_{\norm{\tilde{M}_n}}(u) 
    \nonumber \\
    & \leq  \frac{1}{u q (2/u)^{1-1/q}} 
        +  D \sigma\sqrt{2\log(2/u)} 
        + \alpha_{q,D} (2/u)^{\frac{1}{q}} 
        + \mathbbm{1}_{q > 3}\frac{D^2 e \log(2/u)}{3} 
    \nonumber \\
    & =   D \sigma\sqrt{2\log(2/u)} + \left( \frac{1}{2q} 
        + \alpha_{q,D}\right) \left(\frac{2}{u}\right)^{1/q} 
        + \mathbbm{1}_{q > 3}\frac{D^2 e \log(2/u)}{3} 
    \nonumber \\
    & \leq   D \sigma\sqrt{2\log(2/u)} + \left( \frac{1}{2q} 
        + \alpha_{q,D} \right) \left(\frac{2}{u}\right)^{1/q} 
        +  \mathbbm{1}_{q > 3}\frac{D^2 q}{3}
            \left(\frac{2}{u}\right)^{1/q}
        & \text{(Lemma \ref{lem:log_poly_inequality})} 
    \nonumber \\
    & = D \sigma\sqrt{2\log(2/u)} + \left(\frac{1}{2q} 
        + \alpha_{q,D} + \mathbbm{1}_{q > 3}
            \frac{D^2 q}{3} \right)
            \left(\frac{2}{u}\right)^{1/q} 
    \nonumber \\
    & =  D \sigma\sqrt{2\log(2/u)} 
        + c_{q,D}\left(\frac{2}{u}\right)^{1/q}
    \label{eq:fn_bound}
\end{align}
with the constant
\begin{align*}
    c_{q,D} & := \frac{1}{2q} + \alpha_{q,D} + \mathbbm{1}_{q > 3}\frac{D^2 q}{3}  \\
        & = \frac{1}{2q} + \min\{1/q, 1/5\} + 1 + \mathbbm{1}_{q > 3}\frac{D^2 q}{3}.
\end{align*}

\subsection{Assembling the final bound}
Let $(M_i)_{0 \leq i \leq n}$ be a martingale in $\X$ adapted to
a nondecreasing filtration $(\sigalg_i)_{0 \leq i \leq n}$
with $M_0 = 0$,
and for the martingale differences $\xi_i := M_i - M_{i-1}$ we have
\begin{equation*}
    \sigma^2 :=
    \esssup \sum_{i=1}^n \E_{i-1} [ \norm{\xi_i}^2 ]
    < \infty
    \quad
    \text{ and }
    \quad
    C^q_q := 
    \esssup \sum_{i=1}^n \E_{i-1} [ \norm{\xi_i}^q ] < \infty
\end{equation*}
for some $q > 2$.
We can apply the bound \eqref{eq:fn_bound}
to the martingale $(M_i/C_q)_{0 \leq i \leq n}$,
which fulfills the original normalized assumptions
\eqref{eq:assumptions}
with $\sigma^2$ replaced by $\sigma^2/C_q^2$.
This leads to
\begin{equation*}
    \P \left[
        \max_{i \in \{1, \dots n \}} \norm{M_n/C_q}
        \leq D \sigma/C_q \sqrt{2\log(2/u)} + c_{q,D}\left(\frac{2}{u}\right)^{1/q}
    \right] 
    \geq 1-u
\end{equation*}
for all $u\in(0,1)$, so we finally obtain
\begin{equation*}
    \P \left[
        \max_{i \in \{1, \dots n \}} \norm{M_n}
        \leq D \sigma \sqrt{2\log(2/u)} + c_{q,D}C_q \left(\frac{2}{u}\right)^{1/q}
    \right] 
    \geq 1-u,
\end{equation*}
completing the proof.

\section*{Acknowledgements}
Christian Fiedler acknowledges funding from DFG Project FO 767/10-2 (eBer-24-32734) ``Implicit Bias in Adversarial Training''.

\bibliographystyle{abbrvnat}
\bibliography{references.bib}

@Book{Yurinsky1995,
 Author = {Yurinsky, Vadim},
 Title = {Sums and {Gaussian} {Vectors}},
 Year = {1995},
 Publisher = {Springer},
}

@article{rio2017constants,
  title={About the constants in the {Fuk--Nagaev} inequalities},
  author={Rio, Emmanuel},
  journal={Electronic Communications in Probability},
  year={2017}
}

@book{rio2017dependent,
 author = {Rio, Emmanuel},
 title = {{Asymptotic theory of weakly dependent random processes}},
 year = {2017},
 publisher = {Springer},
}

@article{Pinelis1986Remarks,
	title={Remarks on inequalities for large deviation probabilities},
	author={Pinelis, I. F. and Sakhanenko, A. I.},
	journal={Theory of Probability \& Its Applications},
	volume={30},
	number={1},
	pages={143--148},
	year={1986},
	publisher={SIAM}
}

@Article{Pinelis2014,
AUTHOR = {Pinelis, Iosif},
TITLE = {An Optimal Three-Way Stable and Monotonic Spectrum of Bounds on Quantiles: A Spectrum of Coherent Measures of Financial Risk and Economic Inequality},
JOURNAL = {Risks},
VOLUME = {2},
YEAR = {2014},
NUMBER = {3},
PAGES = {349--392},
}

@incollection{Pflug2000,
 author = {Pflug, Georg},
 title = {Some remarks on the value-at-risk and the conditional value-at-risk},
 booktitle = {Probabilistic constrained optimization. Methodology and applications},
 pages = {272--281},
 year = {2000},
 publisher = {Kluwer Academic Publishers},
}

@article{Rockafellar2000,
	title={Optimization of Conditional Value-at-Risk},
	author={Rockafellar, R.T. and Uryasev, S.},
	journal={The Journal of Risk.},
	volume={2},
	number={3},
	pages={22--41},
	year={2000},
}

@article{Pinelis94,
	author = {Iosif Pinelis},
	journal = {The Annals of Probability},
	number = {4},
	pages = {1679--1706},
	publisher = {Institute of Mathematical Statistics},
	title = {Optimum Bounds for the Distributions of Martingales in {B}anach Spaces},
	volume = {22},
	year = {1994}
}

@article{Pinelis92,
	author = {Pinelis, Iosif},
	year = {1992},
	month = {01},
	pages = {128-134},
	title = {An Approach to Inequalities for the Distributions of Infinite-Dimensional Martingales},
	volume = {8},
	journal = {Probability in {B}anach Spaces: Proceedings of the Eighth International Conference}
}

@article{einmahl,
 author = {Einmahl, Uwe and Li, Deli},
 title = {Characterization of {LIL} behavior in {Banach} space},
 journal = {Transactions of the American Mathematical Society},
 volume = {360},
 number = {12},
 pages = {6677--6693},
 year = {2008},
}

@article{Nagaev1979,
author = {S. V. Nagaev},
title = {{Large Deviations of Sums of Independent Random Variables}},
volume = {7},
journal = {The Annals of Probability},
number = {5},
pages = {745--789},
year = {1979},
}

@Article{FukNagaev1971,
 Author = {Fuk, Dao Ha and Nagaev, S. V.},
 Title = {Probability inequalities for sums of independent random variables},
 Journal = {Theory of Probability and its Applications},
 Volume = {16},
 Pages = {643--660},
 Year = {1971},
}

@Article{Fuk1973,
 Author = {Fuk, D. H.},
 Title = {Some probabilistic inequalities for martingales},
 Journal = {Siberian Mathematical Journal},
 Volume = {14},
 Pages = {131--137},
 Year = {1973},
}

@Article{jirak2025,
 author = {Jirak, Moritz and Minsker, Stanislav and Shen, Yiqiu and Wahl, Martin},
 title = {Concentration and moment inequalities for sums of independent heavy-tailed random matrices},
 year = {2025},
 journal = {Probability Theory and Related Fields},
}

@inproceedings{
mollenhauer2025regularized,
title={Regularized least squares learning with heavy-tailed noise is minimax optimal},
author={Mattes Mollenhauer and Nicole M{\"u}cke and Dimitri Meunier and Arthur Gretton},
booktitle={The Thirty-ninth Annual Conference on Neural Information Processing Systems},
year={2025},
}

@article{pisier1971,
 author = {Pisier, Gilles},
 title = {Martingales with values in uniformly convex spaces},
 journal = {Israel Journal of Mathematics},
 volume = {20},
 pages = {326--350},
 year = {1975},
}

@book{pisier2016,
 author = {Pisier, Gilles},
 title = {Martingales in {Banach} spaces},
 year = {2016},
 publisher = {Cambridge University Press},
}

@book{ledoux1991,
 author = {Ledoux, Michel and Talagrand, Michel},
 title = {Probability in {Banach} spaces},
 edition = {Reprint of the 1991 hardback ed.},
 year = {1991},
 publisher = {Springer},
}

@book{bercu2015concentration,
  author={Bercu, Bernard and Delyon, Bernard and Rio, Emmanuel},
  title={Concentration Inequalities for Sums and Martingales},
  year={2015},
  publisher={Springer}
}

@article{neerven2020,
 author = {van Neerven, Jan and Veraar, Mark},
 title = {Maximal inequalities for stochastic convolutions in 2-smooth {Banach} spaces and applications to stochastic evolution equations},
 journal = {Philosophical Transactions of the Royal Society A},
 volume = {378},
 number = {2185},
 pages = {21},
 year = {2020},
 language = {English},
}

@book{boucheron2013,
 author = {Boucheron, St{\'e}phane and Lugosi, G{\'a}bor and Massart, Pascal},
 title = {Concentration inequalities. {A} nonasymptotic theory of independence},
 year = {2013},
 publisher = {Oxford University Press},
}

@article{fan2017,
 author = {Fan, Xiequan and Grama, Ion and Liu, Quansheng},
 title = {Deviation inequalities for martingales with applications},
 journal = {Journal of Mathematical Analysis and Applications},
 volume = {448},
 number = {1},
 pages = {538--566},
 year = {2017},
}

@article{marchina2021,
 author = {Marchina, Antoine},
 title = {Concentration inequalities for suprema of unbounded empirical processes},
 journal = {Annales Henri Lebesgue},
 volume = {4},
 pages = {831--861},
 year = {2021},
}

@inproceedings{maurerpontil2021,
 author = {Maurer, Andreas and Pontil, Massimiliano},
 booktitle = {Advances in Neural Information Processing Systems},
 pages = {7588--7597},
 publisher = {Curran Associates, Inc.},
 title = {Concentration inequalities under sub-{G}aussian and sub-exponential conditions},
 volume = {34},
 year = {2021}
}

@InProceedings{kontorovich2014,
  title = 	 {Concentration in unbounded metric spaces and algorithmic stability},
  author = 	 {Kontorovich, Aryeh},
  booktitle = 	 {Proceedings of the 31st International Conference on Machine Learning},
  pages = 	 {28--36},
  year = 	 {2014},
  volume = 	 {32},
}

@misc{mcdiarmid,
 author = {McDiarmid, Colin},
 title = {On the method of bounded differences},
 year = {1989},
 howpublished = {Surveys in Combinatorics, 1989: Invited Papers at the Twelfth British Combinatorial Conference},
}

\appendix

\section*{Appendices}

\section{Quantile functions}
\label{app:quantile_functions}

We collect general properties of quantile functions.
All results and proofs 
can be found in a more general form
in \citet{Pinelis2014}.\!\footnote{We note that the
results of \cite{Pinelis2014} are numbered differently
in the published version and preprint version. We refer to
the numbering of the published version.}
In this section, we exclusively consider real-valued random variables
defined on the same probability space.

\begin{lemma}[Submartingale inequality, \citealp{rio2017constants}, Lemma 2.3]
    \label{lem:submartingale_inequality}
    Let $(S_0, S_1, \dots, S_n)$ be a an integrable real-valued nonnegative 
    submartingale. Let $S_n^* = \max_{i=1, \dots,n} S_n$.
    Then we have 
    $Q_{S_n^*} \leq Q^1_{S_n}(u)$ for all $u \in (0, 1)$.
\end{lemma}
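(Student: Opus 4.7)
The plan is to combine Doob's maximal inequality with the Hardy--Littlewood rearrangement inequality. First, I would invoke Doob's maximal inequality for the nonnegative submartingale $(S_i)_{0 \leq i \leq n}$, which yields, for every $t > 0$,
\begin{equation*}
    t \, \P[S_n^* \geq t] \leq \E[S_n \, \mathbbm{1}_{S_n^* \geq t}].
\end{equation*}
Second, I would bound the right-hand side using the Hardy--Littlewood rearrangement inequality: for any event $A$ and any nonnegative integrable random variable $Y$, one has $\E[Y \, \mathbbm{1}_A] \leq \int_0^{\P[A]} Q_Y(s) \, \dd s$, with the supremum over events of prescribed probability $\P[A]$ achieved by an upper level set $\{Y \geq Q_Y(\P[A])\}$. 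Applying this with $A = \{S_n^* \geq t\}$ and $Y = S_n$, and recalling the definition $u \, Q^1_{S_n}(u) = \int_0^u Q_{S_n}(s) \, \dd s$, I obtain
\begin{equation*}
    t \cdot \P[S_n^* \geq t] \leq \P[S_n^* \geq t] \cdot Q^1_{S_n}(\P[S_n^* \geq t]).
\end{equation*}

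Whenever $\P[S_n^* \geq t] > 0$, I can divide through to get $t \leq Q^1_{S_n}(\P[S_n^* \geq t])$. To extract the quantile-level statement, I fix $u \in (0,1)$ and take any $t > Q^1_{S_n}(u)$. If we had $\P[S_n^* \geq t] \geq u$, then combining the previous inequality with the fact that $Q^1_{S_n}$ is nonincreasing would give $t \leq Q^1_{S_n}(\P[S_n^* \geq t]) \leq Q^1_{S_n}(u) < t$, a contradiction. Hence $\P[S_n^* > t] \leq \P[S_n^* \geq t] < u$, which by the definition $Q_{S_n^*}(u) = \inf\{s : \P[S_n^* > s] < u\}$ yields $Q_{S_n^*}(u) \leq t$. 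Passing to the infimum over all $t > Q^1_{S_n}(u)$ then gives $Q_{S_n^*}(u) \leq Q^1_{S_n}(u)$, as claimed.

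The main obstacle is the careful handling of strict versus non-strict inequalities induced by the asymmetric definition of the quantile function; I sidestep this by first establishing the bound for every $t > Q^1_{S_n}(u)$ and only afterwards taking the limit $t \downarrow Q^1_{S_n}(u)$. The two key ingredients---Doob's maximal inequality and Hardy--Littlewood rearrangement---are classical, so the substance of the argument really lies in noticing that they couple cleanly through the monotonicity of $Q^1_{S_n}$, with the factor $\P[S_n^* \geq t]$ cancelling on both sides.
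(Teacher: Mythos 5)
Your proof is correct: Doob's maximal inequality combined with the Hardy--Littlewood bound $\E[S_n \mathbbm{1}_A] \leq \P[A]\, Q^1_{S_n}(\P[A])$ and the monotonicity of $Q^1_{S_n}$, followed by the careful inversion for $t > Q^1_{S_n}(u)$, is exactly the standard argument behind this lemma, which the paper itself only cites from \citet{rio2017constants} without reproving. (The Hardy--Littlewood step could equally be obtained from \cref{lem:variational_formulation}, since $\E[S_n \mathbbm{1}_A] \leq t\,\P[A] + \E[(S_n - t)_+]$ for every $t$, so your two classical ingredients are fully consistent with the toolkit the paper already collects.)
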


The integrated quantile functions allows for the following 
variational formulation originally due
to \citet{Rockafellar2000}.

\begin{lemma}[Variational formulation, \citealp{Pinelis2014}, Theorem 3.3]
    \label{lem:variational_formulation}
    We have
    \begin{equation*}
        Q_X^1(u) := 
        \inf_{t \in \R} \,
        t + \frac{ \E[ (X - t)_+ ] }{ u }.
    \end{equation*}
\end{lemma}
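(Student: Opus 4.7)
The plan is to prove the variational identity $Q_X^1(u) = \inf_{t \in \R}\{t + \E[(X-t)_+]/u\}$ by showing both inequalities, and exhibiting the optimizer $t^\star = Q_X(u)$. The main tool is the quantile representation $Q_X(U) \stackrel{d}{=} X$ for $U \sim \mathrm{Uniform}(0,1)$ (already noted in the appendix), which yields the integral form $\E[(X-t)_+] = \int_0^1 (Q_X(v) - t)_+\,\dd v$ and reduces the problem to a one-dimensional monotonicity argument for the nonincreasing function $Q_X$.

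For the ``$\leq$'' direction, I would begin with the trivial pointwise inequality $Q_X(v) \leq t + (Q_X(v)-t)_+$, valid for every $t \in \R$ and $v \in (0,1)$. Integrating over $v \in (0,u)$ and enlarging the nonnegative right-hand integrand from $(0,u)$ to $(0,1)$ gives
\[
\int_0^u Q_X(v)\,\dd v \;\leq\; ut + \int_0^1 (Q_X(v)-t)_+\,\dd v \;=\; ut + \E[(X-t)_+].
\]
Dividing by $u$ and taking the infimum over $t \in \R$ yields $Q_X^1(u) \leq \inf_t\{t + \E[(X-t)_+]/u\}$.

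For the reverse inequality, it suffices to substitute $t^\star = Q_X(u)$ and verify equality. Since $Q_X$ is nonincreasing, $(Q_X(v) - Q_X(u))_+ = Q_X(v) - Q_X(u)$ on $(0,u]$ and vanishes on $[u,1)$, so using the quantile representation once more,
\[
\E[(X - Q_X(u))_+] \;=\; \int_0^u (Q_X(v) - Q_X(u))\,\dd v \;=\; \int_0^u Q_X(v)\,\dd v \;-\; u\,Q_X(u).
\]
Dividing by $u$ and adding $Q_X(u)$ on both sides recovers exactly $u^{-1}\int_0^u Q_X(v)\,\dd v = Q_X^1(u)$, so the infimum is attained at $t^\star$ and equals $Q_X^1(u)$.

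The proof is essentially elementary; the only mild subtlety is that $Q_X$ may possess jumps or flat plateaus, but these cause no trouble: on $[u,1)$ one always has $Q_X(v) \leq Q_X(u)$ by monotonicity, so the positive part vanishes there as a pointwise identity (not just almost everywhere), and no further care beyond the monotonicity of the quantile function is needed.
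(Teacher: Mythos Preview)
Your proof is correct. The paper does not supply its own proof of this lemma; it simply cites \citet{Pinelis2014} (and attributes the result to \citealp{Rockafellar2000}), so there is nothing in the paper to compare against directly. Your argument is the standard elementary derivation of the Rockafellar--Uryasev representation via the quantile transform $Q_X(U)\stackrel{d}{=}X$, which is essentially the same device used in the cited references: the ``$\leq$'' direction comes from $x\leq t+(x-t)_+$, and the ``$\geq$'' direction from exhibiting the minimizer $t^\star=Q_X(u)$ and using monotonicity of $Q_X$ to evaluate $\E[(X-t^\star)_+]$ exactly.
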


We collect three other general properties of $Q^1$ and $Q^\infty$.

\begin{lemma}[Quantile bounds,  \citealp{Pinelis2014}, Theorem 3.4]
    \label{lem:quantile_bounds}
    For all $u \in (0,1)$, we have
    \begin{equation*}
        Q_X(u) \leq Q^1_X(u) \leq Q^\infty_X(u).
    \end{equation*}
\end{lemma}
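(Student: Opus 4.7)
The statement decomposes into two inequalities that can be handled independently, both drawing only on the elementary properties of $Q_X$ and $Q_X^\infty$ recalled in the paper. I would prove the two in order.

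\textbf{First inequality $Q_X(u)\le Q_X^1(u)$.} The paper explicitly notes that $Q_X$ is nonincreasing in $u$. Hence for every $s\in(0,u]$ we have $Q_X(s)\ge Q_X(u)$, and integrating this pointwise bound against the uniform density $1/u$ on $(0,u)$ gives
\begin{equation*}
    Q_X^1(u)=\frac{1}{u}\int_0^u Q_X(s)\,\dd s\ge \frac{1}{u}\int_0^u Q_X(u)\,\dd s = Q_X(u).
\end{equation*}
This is essentially a one-line monotonicity argument and requires no additional input.

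\textbf{Second inequality $Q_X^1(u)\le Q_X^\infty(u)$.} The idea is to exponentiate $Q_X^1(u)$ via Jensen and then relate the resulting integral to the full moment generating function, using the paper's remark that if $U\sim\mathrm{Unif}[0,1]$ then $Q_X(U)$ has the same law as $X$. Concretely, fix $s>0$ and apply Jensen's inequality to the convex function $\exp$ with the uniform probability measure on $(0,u)$:
\begin{equation*}
    \exp\!\bigl(s\,Q_X^1(u)\bigr)=\exp\!\Bigl(\tfrac{s}{u}\int_0^u Q_X(r)\,\dd r\Bigr)\le\frac{1}{u}\int_0^u\exp\!\bigl(s\,Q_X(r)\bigr)\,\dd r.
\end{equation*}
Because the integrand is nonnegative, extending the range of integration to $(0,1)$ and then using $Q_X(U)\stackrel{d}{=}X$ yields
\begin{equation*}
    \int_0^u\exp\!\bigl(s\,Q_X(r)\bigr)\,\dd r\le\int_0^1\exp\!\bigl(s\,Q_X(r)\bigr)\,\dd r=\E[\exp(sX)].
\end{equation*}
Combining these two displays, taking logarithms, dividing by $s$, and finally taking the infimum over $s\in(0,\infty)$ gives
\begin{equation*}
    Q_X^1(u)\le\inf_{s>0}s^{-1}\log\!\bigl(\E[\exp(sX)]/u\bigr)=Q_X^\infty(u).
\end{equation*}

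\textbf{Expected obstacles.} There is no substantive obstacle; the only minor care needed is to ensure the Jensen step is stated against the uniform probability measure on $(0,u)$ (not Lebesgue measure), and to note that the extension of the integration range in the second step is valid because $\exp(sQ_X(\cdot))\ge 0$. Degenerate cases such as $\E[\exp(sX)]=\infty$ or $Q_X^\infty(u)=\infty$ are handled automatically since the inequalities then become vacuous.
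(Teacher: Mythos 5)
Your proof is correct. The paper does not actually reprove this lemma—it cites it directly to Pinelis (2014, Theorem 3.4)—so there is no internal argument to compare against; your two-step derivation (monotonicity of $Q_X$ for the first inequality, Jensen applied to the uniform measure on $(0,u)$ followed by extension of the integration range and the identity $Q_X(U)\stackrel{d}{=}X$ for the second) is the standard route and is complete as written, including the remark that the inequality is vacuous when $\E[\exp(sX)]=\infty$.
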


\begin{lemma}[Monotonicity of quantile functions,  \citealp{Pinelis2014}, Theorem 3.4]
    \label{lem:quantile_monotonicity}
    Let $X,Y$ be random variables with $X \leq Y$ almost surely,
    then we have for all $u \in (0,1)$ that
    \begin{equation*}
        Q_X(u) \leq Q_Y(u).
    \end{equation*}
\end{lemma}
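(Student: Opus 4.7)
The plan is to unpack the definition of the quantile function and chase an elementary set inclusion. First I would note that the almost-sure inequality $X \leq Y$ implies, for every fixed $t \in \R$, the pointwise inclusion $\{X > t\} \subseteq \{Y > t\}$ up to a $\P$-null set, and hence the distributional inequality $\P[X > t] \leq \P[Y > t]$.

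Next, I would translate this into a comparison of the defining sets for the two quantiles. Fix $u \in (0,1)$ and let $A_X(u) := \{ t \in \R \mid \P[X > t] < u \}$ and $A_Y(u) := \{ t \in \R \mid \P[Y > t] < u \}$, so that by definition $Q_X(u) = \inf A_X(u)$ and $Q_Y(u) = \inf A_Y(u)$. If $t \in A_Y(u)$, then the tail comparison from the previous step gives $\P[X > t] \leq \P[Y > t] < u$, so $t \in A_X(u)$; thus $A_Y(u) \subseteq A_X(u)$. Taking infima over the smaller set yields a larger (or equal) value, so $Q_X(u) = \inf A_X(u) \leq \inf A_Y(u) = Q_Y(u)$, which is the claim.

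There is no real obstacle here: the entire argument is the standard translation of a pathwise ordering into a quantile ordering via the complementary distribution function. The only minor care needed is keeping track of the strict inequality $\P[\,\cdot\,] < u$ in the definition of $Q$ (rather than $\leq$), but this does not affect the chain of implications because the comparison $\P[X > t] \leq \P[Y > t]$ propagates strict inequality on the right to strict inequality on the left.
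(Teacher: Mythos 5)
Your argument is correct: the a.s.\ ordering gives $\P[X>t]\leq\P[Y>t]$ for every $t$, hence the defining set $\{t\mid \P[Y>t]<u\}$ is contained in $\{t\mid \P[X>t]<u\}$, and taking infima reverses the inclusion into the desired inequality $Q_X(u)\leq Q_Y(u)$. The paper does not prove this lemma itself but simply cites \citet{Pinelis2014}; your elementary set-inclusion proof is exactly the standard argument one would supply, and the point you flag about the strict inequality $<u$ is handled correctly.
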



\begin{lemma}[Subadditivity, \citealp{Pinelis2014}, Theorem 3.4]
    \label{lem:subadditivity}
    The functions $X \mapsto Q_X^1$ and $Y \mapsto Q_X^\infty$ 
    are subadditive
    in the sense that for all $X$ and $Y$, we have
    \begin{equation*}
        Q^1_{X + Y}(u)
        \leq
        Q^1_{X}(u) + Q^1_{Y}(u)
        \quad \text{and} \quad
        Q^1_{X + Y}(u)
        \leq
        Q^1_{X}(u) + Q^1_{Y}(u)
    \end{equation*}
    for all $u \in (0, 1)$.
\end{lemma}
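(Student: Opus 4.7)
The plan is to prove each subadditivity inequality separately via the variational characterization of the corresponding quantile functional; I read the second displayed inequality as the subadditivity of $Q^\infty$, since the statement as printed is otherwise redundant.

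For the $Q^1$ inequality, I would appeal to \cref{lem:variational_formulation}, which identifies $Q^1_X(u) = \inf_{t \in \R}\{t + u^{-1}\E[(X-t)_+]\}$. The elementary pointwise bound $(a+b)_+ \leq a_+ + b_+$ gives
\[
    \E[(X+Y-(t+s))_+] \leq \E[(X-t)_+] + \E[(Y-s)_+]
\]
for arbitrary $t, s \in \R$. Evaluating the variational formula for $X+Y$ at shift $t+s$ and inserting this bound yields
\[
    Q^1_{X+Y}(u) \leq (t+s) + u^{-1}\E[(X-t)_+] + u^{-1}\E[(Y-s)_+],
\]
and minimizing over $t$ and $s$ independently produces $Q^1_X(u) + Q^1_Y(u)$.

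For the $Q^\infty$ inequality, I would use H\"older's inequality to decouple the exponential moment of $X+Y$. For any conjugate pair $p, q > 1$ with $1/p + 1/q = 1$ and any $\tau > 0$,
\[
    \E[e^{\tau(X+Y)}] \leq \E[e^{\tau p X}]^{1/p}\, \E[e^{\tau q Y}]^{1/q}.
\]
Taking logarithms, dividing by $\tau$, and splitting $-\log u = -\log(u)/p - \log(u)/q$, the change of variables $s := \tau p$, $r := \tau q$ gives
\[
    \frac{\log(\E[e^{\tau(X+Y)}]/u)}{\tau} \leq \frac{\log(\E[e^{sX}]/u)}{s} + \frac{\log(\E[e^{rY}]/u)}{r}.
\]
A short computation shows that as $(\tau, p)$ ranges over $(0,\infty)\times(1,\infty)$, the induced pair $(s, r)$ sweeps out all of $(0,\infty)^2$ (explicitly, $p = (s+r)/r$, $q = (s+r)/s$, $\tau = sr/(s+r)$). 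Thus minimizing the right-hand side over $s, r > 0$ independently yields $Q^\infty_X(u) + Q^\infty_Y(u)$, while every achievable value of the left-hand side bounds $Q^\infty_{X+Y}(u)$ from above.

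The main obstacle is the modest change-of-variables bookkeeping in the $Q^\infty$ case: one must verify surjectivity of $(\tau, p) \mapsto (s, r)$ onto $(0, \infty)^2$ so that the two infima on the right genuinely separate into the defining infima of $Q^\infty_X(u)$ and $Q^\infty_Y(u)$. The $Q^1$ argument, by contrast, is essentially a one-line consequence of the variational representation.
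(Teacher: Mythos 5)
Your proof is correct, including your reading of the statement: the second displayed inequality in the lemma is a typo (it repeats the $Q^1$ bound) and is indeed meant to assert $Q^\infty_{X+Y}(u)\leq Q^\infty_X(u)+Q^\infty_Y(u)$. The paper itself gives no proof of this lemma — it is quoted from \citet{Pinelis2014} — so your argument is a self-contained replacement rather than a variant of an in-paper proof. For $Q^1$ you use exactly the variational (Rockafellar--Uryasev) representation that the paper records as \cref{lem:variational_formulation}, evaluating at the shift $t+s$ and using $(a+b)_+\leq a_+ + b_+$; this is the standard CVaR-subadditivity argument and is sound. For $Q^\infty$ your H\"older decoupling works: the change of variables $(\tau,p)\mapsto(s,r)=(\tau p,\tau q)$ is surjective onto $(0,\infty)^2$ since $1/s+1/r=1/\tau$ forces $\tau=sr/(s+r)$, $p=(s+r)/r>1$, so the two infima on the right separate into the defining infima of $Q^\infty_X(u)$ and $Q^\infty_Y(u)$, while each choice of $\tau$ dominates $Q^\infty_{X+Y}(u)$; the split of $-\log u$ into the weights $1/p$ and $1/q$ is exact. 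The only cosmetic caveats are degenerate cases (if some exponential moment is infinite the corresponding term is $+\infty$ and the inequality is trivial, and similarly the $Q^1$ bound should be read in $(-\infty,\infty]$), which you could mention in one line; otherwise nothing is missing, and your route via H\"older is the same one used in the cited literature for this functional.
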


The function $X \mapsto Q_X$ is generally \emph{not subadditive}.

\begin{remark}[Chernoff bound]
    \label{rem:quantile_chernoff_bound}
    The bound $Q_X(u) \leq Q_X^\infty(u)$
    contained in the statement of \cref{lem:quantile_bounds}
    captures the usual Chernoff bound
    performed to obtain a sharp tail bound
    in terms of the quantile function.
    In particular, we have    
    by Markov's inequality,
    for all $s\in\R$ and $t>0$ we have
    \begin{equation*}
        \P[X>s] \leq \E[\exp(tX)]\exp(-st).
    \end{equation*}
    Let now $u\in(0,1]$. For arbitrary, but fixed $t>0$ we can now define $s=s(t)=t^{-1}\ln(\E[\exp(tX)]/u)$, leading to
    \begin{equation*}
        \P[X>t^{-1}\ln(\E[\exp(tX)]/u)] \leq u,
    \end{equation*}
    and since $t>0$ was arbitrary and probability measures are continuous from above, we finally get
    \begin{equation*}
        \P[X> \inf_{t>0} t^{-1}\ln(\E[\exp(tX)]/u)] \leq u.
    \end{equation*}
    Using the definition of $Q_X^\infty$, we find that for all $u\in(0,1]$ it holds that
    \begin{equation*}
        \P[X > Q_X^\infty(u)] \leq u.
    \end{equation*}

\end{remark}

\section{Technical results}
\label{app:miscellaneous}

\begin{lemma}
\label{lem:log_poly_inequality}
For all $x > 0$ and $q > 0$, we have
    \begin{equation*}
        \log(x) \leq \frac{q}{e} x^{1/q}.
    \end{equation*}
\end{lemma}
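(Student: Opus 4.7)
The plan is to reduce the two-parameter inequality to a single-parameter classical inequality by a clean substitution, and then dispatch the resulting one-variable inequality with an elementary calculus argument.

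First, I would substitute $y := x^{1/q}$, so that $x = y^q$ ranges over $(0, \infty)$ as $x$ does, and $\log(x) = q\log(y)$. Under this substitution, the desired inequality $\log(x) \leq (q/e) x^{1/q}$ becomes $q\log(y) \leq (q/e) y$. Since $q > 0$, this is equivalent to
\begin{equation*}
    \log(y) \leq \frac{y}{e} \quad \text{for all } y > 0,
\end{equation*}
so the entire statement reduces to this parameter-free inequality.

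To establish the reduced inequality, I would define $f(y) := y/e - \log(y)$ on $(0,\infty)$ and verify that $f \geq 0$. Differentiating gives $f'(y) = 1/e - 1/y$, which vanishes uniquely at $y = e$, and the second derivative $f''(y) = 1/y^2 > 0$ confirms that this critical point is a global minimum. Since $f(e) = 1 - \log(e) = 0$, we obtain $f(y) \geq 0$ for all $y > 0$, which is the desired bound.

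There is no real obstacle here; the only subtlety is noticing that the substitution $y = x^{1/q}$ decouples the parameter $q$ from the inequality, so that a single one-variable estimate suffices. The reduced inequality $\log(y) \leq y/e$ is itself a standard consequence of the concavity of the logarithm and the tangent line bound at $y = e$.
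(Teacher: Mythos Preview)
Your proof is correct and essentially the same as the paper's: both reduce the two-parameter statement to a single-variable inequality via a substitution and then invoke an elementary convexity fact. The paper substitutes $z = \log(x)/q$ into the standard bound $z \leq e^{z-1}$, while you substitute $y = x^{1/q}$ and prove $\log y \leq y/e$ by calculus; these base inequalities are equivalent up to a change of variables, so there is no meaningful difference in approach.
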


\begin{proof}
    From the inequality $z \leq e^{z-1}$ valid for all $z \in \R$,
    we obtain $zq \leq \frac{q}{e} e^z$.
    For all $x > 0$, we may
    substitute $z = \log(x)/q$ into this inequality,
    proving the claim.
\end{proof}

The following claim is contained in \cite[Equation~(2.17)]{bercu2015concentration},
we provide a proof for completeness.

\begin{lemma} \label{lem:bercuEtAl15_(2.17)}
Let $c, x\in\Rp$ and $v\in\Rnn$. It holds that
\begin{equation}
    \inf_{\substack{t>0\\ ct < 1}} \frac{vt}{2(1-ct)} + \frac{x}{t} = cx + \sqrt{2xv}.
\end{equation}
\end{lemma}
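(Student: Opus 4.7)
The plan is to treat this as a straightforward one-variable calculus optimization problem on the open interval $(0, 1/c)$, since the constraint $ct<1$ together with $t>0$ defines exactly this interval. Write
\begin{equation*}
    f(t) := \frac{vt}{2(1-ct)} + \frac{x}{t}, \quad t \in (0, 1/c),
\end{equation*}
so that we need to show $\inf_{t \in (0,1/c)} f(t) = cx + \sqrt{2xv}$.

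First I would dispose of the degenerate case $v = 0$ separately: then $f(t) = x/t$ is strictly decreasing on $(0,1/c)$, so $\inf f = \lim_{t \uparrow 1/c} x/t = cx$, matching the claimed value $cx + \sqrt{2 x \cdot 0} = cx$ (the infimum is not attained, which is why the statement is phrased as an infimum rather than a minimum). For the main case $v > 0$, I would differentiate: using the quotient rule, a short calculation gives
\begin{equation*}
    f'(t) = \frac{v}{2(1-ct)^2} - \frac{x}{t^2}.
\end{equation*}
Setting $f'(t)=0$ and taking positive square roots (both $t > 0$ and $1 - ct > 0$ hold on our interval) leads to $t\sqrt{v} = \sqrt{2x}\,(1-ct)$, which I would solve to find the unique critical point
\begin{equation*}
    t^* = \frac{\sqrt{2x}}{\sqrt{v} + c\sqrt{2x}} \in (0, 1/c).
\end{equation*}

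Since $f(t) \to +\infty$ both as $t \downarrow 0$ (from the $x/t$ term) and as $t \uparrow 1/c$ (from the $vt/(2(1-ct))$ term, using $v > 0$), the continuous function $f$ attains its infimum on $(0,1/c)$, and the unique critical point $t^*$ must therefore be the global minimizer. The remainder of the proof is then just plugging in: one computes $1 - c t^* = \sqrt{v}/(\sqrt{v} + c\sqrt{2x})$, which makes $\tfrac{vt^*}{2(1-ct^*)}$ simplify to $\sqrt{2xv}/2$, and separately $x/t^* = cx + \sqrt{2xv}/2$, so that $f(t^*) = cx + \sqrt{2xv}$ as claimed.

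There is no real obstacle here; the only mild subtlety is the $v=0$ boundary case where the infimum is not attained, which needs to be acknowledged rather than computed by first-order conditions. Everything else is routine algebra after the critical point is identified.
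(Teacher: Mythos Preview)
Your proof is correct and follows essentially the same approach as the paper: compute the derivative, locate the unique critical point $t^* = \sqrt{2x}/(\sqrt{v}+c\sqrt{2x})$, verify it is the minimizer, and evaluate. Your treatment is in fact slightly more careful, since you handle the boundary case $v=0$ explicitly (where $t^*$ would collapse to $1/c$ and the infimum is not attained), whereas the paper simply asserts $t^*\in(0,1/c)$ and appeals to convexity via the second derivative.
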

\begin{proof}
Let $h(t) :=  \frac{vt}{2(1-ct)} + \frac{x}{t}$ for $t \in (0, 1/c)$,
then we have
\begin{equation*}
    h'(t)
    =
    \frac{v}{2(1-ct)^2}
    -
    \frac{x}{t^2}
\end{equation*}
and
\begin{equation*}
    h''(t) = \frac{cv}{(1-ct)^3} + \frac{2x}{t^3}.
\end{equation*}
Since $c,x\in\Rp$, we have $h''(t)>0$ for $t\in(0,1/c)$, so $h$ is convex.
We determine the root $t^*$ of $h'$ as
\begin{equation*}
    t^* 
    =
    \frac{\sqrt{2x}}{c\sqrt{2x} + \sqrt{v}}
    \in (0, 1/c).
\end{equation*}
After elementary calculations, we see that $h(t^*) =cx + \sqrt{2xv}$,
proving the claim.
\end{proof}

\end{document}